\newcommand{\Ann}{\operatorname{Ann}}
\newcommand{\Soc}{\operatorname{Soc}}
\newcommand{\Aut}{\operatorname{Aut}}
\newcommand{\id}{\operatorname{id}}
\newcommand{\Ret}{\operatorname{Ret}}
\newcommand{\Z}{\mathbb{Z}}
\DeclareMathOperator{\ab}{ab}
\numberwithin{equation}{section}
\numberwithin{figure}{section}
\numberwithin{table}{section}
\newtheorem{thm}{Theorem}[section]
\newtheorem*{thm*}{Theorem}
\newtheorem{lem}[thm]{Lemma}
\newtheorem{cor}[thm]{Corollary}
\newtheorem{pro}[thm]{Proposition}
\newtheorem{notation}[thm]{Notation}
\theoremstyle{definition} 
\newtheorem{defn}[thm]{Definition}
\newtheorem{rem}[thm]{Remark}
\newtheorem{exa}[thm]{Example}
\title{Isoclinism of skew braces}
\author{T. Letourmy}
\author{L. Vendramin}
\address[T. Letourmy, L. Vendramin]{
Department of Mathematics, Vrije Universiteit Brussel, Pleinlaan 2, 1050 Brussel, Belgium}
\email{Leandro.Vendramin@vub.be}
\address[T. Letourmy]{D\'epartement de Math\'ematique, Universit\'e Libre de Bruxelles, Boulevard du Triomphe, B-1050 Brussels, Belgium}
\email{thomas.letourmy@ulb.be}
\subjclass[2020]{Primary:16T25, Secondary:17D99}
\keywords{Skew brace, isoclinism, Yang--Baxter, multipermutation}
\begin{document}

\begin{abstract}
    We define isoclinism of skew braces and
    present several applications. We study 
    some properties of skew braces
    that are invariant under isoclinism. For example,
    we prove that 
    right nilpotency is an isoclinism invariant. 
    This result has application in 
    the theory of set-theoretic solutions to the Yang--Baxter
    equation. 
    We define isoclinic solutions and study 
    multipermutation solutions under isoclinism. 
\end{abstract}
 
\maketitle

\section{Introduction}

The fundamental problem of constructing combinatorial solutions 
of the Yang--Baxter equation (YBE) is nowadays 
based on the use of specific (associative and non-associative) 
algebraic structures associated with the solution. 
One particular algebraic structure stands out: skew braces. 
Such structures were introduced in \cite{MR2278047} and \cite{MR3647970}.
The theory originates in Jacobson radical rings, and it is now
considered a hot topic, as skew braces appear in several 
different areas of mathematics, see for 
example \cite{MR4390798,MR3291816,MR3763907}.  

Skew braces classify solutions \cite{MR3835326,MR3527540}. 
This justifies the search for classification results on
skew braces. In this vein, several results are known; see for example \cite{MR4113853,MR3647970}. 
However, the classification of skew braces of order $p^n$ ($p$ prime)  
is still hard to achieve. 

To classify $p$-groups, in  \cite{MR3389} 
Hall introduced a specific equivalence relation that simplifies the problem. Without technicalities, 
one defines isoclinic groups
as groups that have ``essentially the same" commutator functions. Recall
that the commutator function for 
the group $G$ is the
map
\[
G/Z(G)\times G/Z(G)\to G,\quad
(xZ(G),yZ(G))\mapsto [x,y]=xyx^{-1}y^{-1}.
\]
Isoclinism is an equivalence relation that generalizes isomorphisms.

There are several different motivations to consider 
this notion in the context of skew braces. 
With isoclinism of skew braces,
we will have a new tool to classify  
finite skew braces of prime-power order, something that 
ultimately will have
applications in other branches of mathematics, for example
in the theory of pre-Lie algebras \cite{MR4391819,MR4353236,MR4484785}. 

In \cite{LV}, the authors defined Schur covers of finite 
skew braces 
and proved that any two Schur covers are isoclinic. 

In this work, we define isoclinism in the context of skew braces. 
We show that the following properties are preserved under isoclinism: triviality (Theorem \ref{thm:trivial}), two-sidedness (Theorem \ref{thm:2sided}), right nilpotence (Theorem \ref{thm:right_nilpotent}), the lattice of sub skew braces containing the annihilator (Proposition \ref{pro:lattice}), and the size of $\lambda-$ and $\rho-$orbits up to a constant factor (Theorem \ref{thm:Horbits}).
As an application, we define a notion of isoclinism of set-theoretic solutions to the YBE. 
This new equivalence relation defined on the space of solutions suggests a way of attacking the classification problem of set-theoretic
solutions to the YBE. Isoclinism classes of solutions could be relevant in the study of multipermutation solutions \cite{MR3815290,MR4457900,MR1722951,MR2885602,MR3974961}. We
prove in Section \ref{section:YB} that
if $(X,r)$ 
and $(Y,s)$ are isoclinic set-theoretic solutions and 
$(X,r)$ is multipermutation, then 
$(Y,s)$ is multipermutation (see Theorem \ref{thm:YB}).

\section{Isoclinism of skew braces}

Recall that a \emph{skew brace} is a triple $(A,+,\circ)$, 
where $(A,+)$ and $(A,\circ)$ are groups such that
the compatibility condition 
$a\circ (b+c)=a\circ b-a+a\circ c$ holds for all $a,b,c\in A$. The inverse of an element $a\in A$ with respect to the circle 
operation $\circ$ will be denoted by $a'$. We will denote the groups $(A,+)$ and $(A,\circ)$ respectively by $A_+$ and $A_\circ$.

Let $A$ be a skew brace. There are two canonical actions by automorphism
\begin{align*}
    &\lambda\colon A_\circ\to \Aut(A_+), &&\lambda_a(b)=-a+a\circ b,\\
    &\rho\colon A_\circ\to \Aut(A_+), &&\rho_a(b)=a\circ b-a.
\end{align*}

By definition, a sub skew brace $B$ of $A$
is an ideal of $A$ if $B_+$ is normal in $A_+$, 
$B_\circ$ is normal in $A_\circ$ and
$\lambda_a(B)\subseteq B$ for all $a\in A$. 

We write $A^{(2)}=A*A$ to denote the additive subgroup of $A$ generated by $\{a*b:a,b\in A\}$, where $a*b=-a+a\circ b-b$ for all $a,b\in A$. One proves that $A^{(2)}$ is an ideal of $A$. If $a,b\in A$, 
we write $[a,b]_+=a+b-a-b$ and $[a,b]_\circ=a\circ b\circ a'\circ b'$. We write $[A,A]_+$ to
denote the commutator subgroup of the additive group of $A$. 
The \emph{socle} of $A$ is the ideal 
$\Soc(A)=\ker\lambda\cap Z(A,+)$ and 
the \emph{annihilator} of $A$ is
the ideal $\Ann(A)=\Soc(A)\cap Z(A,\circ)$; see
\cite{MR3917122}.

\begin{defn}
    Let $A$ be a skew brace. The \emph{commutator} $A'$ of $A$ is
    the additive subgroup of $A$ generated by $[A,A]_+$ and $A^{(2)}$. 
\end{defn}

\begin{pro}
    Let $A$ be a skew brace. Then $A'$ is an ideal of $A$.
\end{pro}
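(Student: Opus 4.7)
The plan is to verify each clause of the definition of ideal: $A'$ must be a sub skew brace, $A'_+$ must be normal in $A_+$, $A'_\circ$ must be normal in $A_\circ$, and $\lambda_a(A')\subseteq A'$ for all $a\in A$. The key ingredient is $\lambda$-invariance, which I would establish first. Using $a*b=\lambda_a(b)-b$ and the multiplicativity $\lambda_{c\circ a}=\lambda_c\lambda_a$, one computes
\[
\lambda_c(a*b)=\lambda_c(\lambda_a(b))-\lambda_c(b)=\lambda_{c\circ a}(b)-\lambda_c(b)=(c\circ a)*b-c*b\in A^{(2)}.
\]
Since $\lambda_c\in\Aut(A_+)$ also preserves $[A,A]_+$, and $A'$ is by definition the additive subgroup generated by $[A,A]_+\cup A^{(2)}$, it follows that $\lambda_c(A')\subseteq A'$.

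Second, I would check that $A'$ is a sub skew brace and is additively normal. By construction $A'$ is an additive subgroup, and it contains $[A,A]_+$, so $A_+/A'$ is abelian and $A'$ is normal in $A_+$. For closure under $\circ$, if $x,y\in A'$ then $x\circ y=x+\lambda_x(y)$ with $\lambda_x(y)\in A'$ by the first step. For closure under $\circ$-inverses, the identity $0=x\circ x'=x+\lambda_x(x')$ gives $\lambda_x(x')=-x$, so
\[
x*x'=\lambda_x(x')-x'=-x-x',
\]
which rearranges to $x'=-(x*x')-x$; both summands lie in $A'$.

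The main step is normality of $A'$ in $A_\circ$. For $a\in A$ and $x\in A'$, applying $y\circ z=y+\lambda_y(z)$ twice yields
\[
a\circ x\circ a'=a+\lambda_a(x)+\lambda_a\bigl(\lambda_x(a')\bigr).
\]
Writing $\lambda_x(a')=a'+(x*a')$ and using $\lambda_a(a')=-a$ (which follows from $0=a\circ a'=a+\lambda_a(a')$), this simplifies to
\[
a\circ x\circ a'=(a+\lambda_a(x)-a)+\lambda_a(x*a').
\]
The right summand lies in $A'$ since $x*a'\in A^{(2)}$ and $\lambda_a$ preserves $A'$; the left summand differs from $\lambda_a(x)\in A'$ by an element of $[A,A]_+\subseteq A'$ and hence also lies in $A'$. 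The principal difficulty is this final step, where one juggles the two group operations simultaneously; the remainder of the argument reduces to the observation that each of the two families of additive generators of $A'$ is stable under the $\lambda$-action.
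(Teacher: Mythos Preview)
Your argument is correct in outline and essentially follows the paper's approach: verify $\lambda$-invariance, additive normality, and multiplicative normality by rewriting each relevant expression as a sum of elements of $[A,A]_+$ and $A^{(2)}$. The paper's version is a bit terser---for $\lambda$-invariance it simply uses $\lambda_a(x)=a*x+x$ for arbitrary $x\in A'$ rather than checking stability of each generating family separately, and for the $\circ$-conjugation it writes down a longer five-term identity in place of your cleaner computation.

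One small slip to fix: from $a*b=\lambda_a(b)-b$ you get $\lambda_x(a')=(x*a')+a'$, not $a'+(x*a')$, and in a non-abelian $(A,+)$ the order matters. With the correct order your computation becomes
\[
a\circ x\circ a'
= a+\lambda_a(x)+\lambda_a\bigl((x*a')+a'\bigr)
= a+\lambda_a(x)+\lambda_a(x*a')-a,
\]
which is an additive conjugate of the element $\lambda_a(x)+\lambda_a(x*a')\in A'$ and hence lies in $A'$ by the additive normality you already established. So the conclusion stands; only the displayed formula $(a+\lambda_a(x)-a)+\lambda_a(x*a')$ needs to be replaced.
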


\begin{proof}
    It follows from the fact that 
    for all $a\in A$ and $x\in A'$
    $\lambda_a(x)=a*x+x\in A'$, $a+x-a=x+[-x,a]_+\in A'$ and 
    \begin{align*}
     a\circ x\circ a'=\lambda_a(x)&+[-a\circ x,a]_+\\&-a'*a+(a\circ x)*a'+[a',-a\circ x\circ a'+a\circ x]_+\in A'
    \end{align*}
    for all $a\in A$ and $x\in A'$. 
\end{proof}

A skew brace $A$ is said to be \emph{trivial} 
if $a+b=a\circ b$ for all $a,b\in A$. 

\begin{exa}
    In the case of a trivial brace $A$, $A'$ 
    is the commutator of the underlying group.
\end{exa}
\begin{notation}
In general, if $A/B$ is a quotient of a skew brace, we will denote the equivalence class of $a\in A$ in $A/B$ by $\overline{a}$.
\end{notation}
\begin{rem}
    $A'$ is the smallest ideal of $A$ 
    such that $A^{\ab}=A/A'$ is an abelian group. 
\end{rem}

\begin{lem}
We define two collections of maps $\phi_+$ and $\phi_*$ that associate respectively to every skew brace $B$ the maps
\begin{align}
    \label{eq:commutator}&\phi^B_{+}\colon (B/\Ann B)^2\to B',&&(\overline{a},\overline{b})\mapsto [a,b]_+,\\
    \label{eq:star}&\phi^B_{*}\colon (B/\Ann B)^2\to B',&&(\overline{a},\overline{b})\mapsto a*b. 
\end{align}
    The maps \eqref{eq:commutator} and \eqref{eq:star} are well-defined. 
\end{lem}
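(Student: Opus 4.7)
The plan is to show that the value of each map is unchanged when the representatives $a$ and $b$ are replaced by $a+z_1$ and $b+z_2$ with $z_1,z_2\in\Ann B$. Since $\Ann B$ is an ideal and sits in the kernel of both centers, the key consequences to extract first are: for every $z\in\Ann B$ and $b\in B$,
\[
z+b=b+z,\qquad z\circ b=b\circ z,\qquad z\circ b=z+b,
\]
the last coming from $\lambda_z=\id$ (so $z*b=0$), which together with $Z(B,\circ)$-centrality also gives $b\circ z=b+z$. Thus additively and multiplicatively elements of $\Ann B$ behave as ``central translations,'' and $a+z=a\circ z=z\circ a=z+a$.

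For $\phi^B_+$, I would compute directly: $[a+z_1,b+z_2]_+=(a+z_1)+(b+z_2)-(a+z_1)-(b+z_2)$, and since $z_1,z_2$ lie in $Z(B,+)$ they can be freely moved past everything and cancelled, leaving $[a,b]_+$. For $\phi^B_*$, the main computation is to evaluate $(a+z_1)\circ(b+z_2)$. Rewriting this as $(a\circ z_1)\circ(b\circ z_2)$ using the identity $a+z_1=a\circ z_1$, then pushing $z_1$ past $b$ using $z_1\in Z(B,\circ)$, one obtains $a\circ b\circ z_1\circ z_2$. Since $z_1+z_2\in\Ann B$ (as $\Ann B$ is an ideal) and $\Soc$-elements satisfy $z_1\circ z_2=z_1+z_2$, one arrives at
\[
(a+z_1)\circ(b+z_2)=a\circ b+z_1+z_2.
\]

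Plugging this into $(a+z_1)*(b+z_2)=-(a+z_1)+(a+z_1)\circ(b+z_2)-(b+z_2)$ and again using that $z_1,z_2$ commute additively with every element, the $\pm z_i$ pair up and cancel, leaving $-a+a\circ b-b=a*b$.

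The only genuinely delicate step is the computation of $(a+z_1)\circ(b+z_2)$: this is where one must carefully use all three properties of annihilator elements ($+$-centrality, $\circ$-centrality, and being in $\ker\lambda$) in tandem to convert an expression written in one operation into one written in the other. The fact that the images actually land in $B'$ is automatic, since $[B,B]_+\subseteq B'$ and $B*B\subseteq B'$ by definition.
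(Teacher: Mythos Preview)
Your argument is correct and follows essentially the same approach as the paper: both are direct verifications exploiting the three defining properties of annihilator elements (additive centrality, multiplicative centrality, and $\lambda_z=\id$) to reduce $(a+z_1)*(b+z_2)$ to $a*b$. The only cosmetic difference is that the paper computes the difference $a*b-c*d$ and shows it vanishes via an additive conjugation by $c$, whereas you compute $(a+z_1)\circ(b+z_2)=a\circ b+z_1+z_2$ first and then cancel; the underlying identities used are identical.
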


\begin{proof}
    A direct calculation shows that \eqref{eq:commutator} is well-defined. To prove
    that \eqref{eq:star} is well defined, let $\left(\overline{a},\overline{b}\right)=\left(\overline{c},\overline{d}\right)\in (B/\Ann B)^2$. We have 
    \begin{align*}
    -a+a\circ b-b-\left(-c+c\circ d-d\right)
    = -a+a\circ d-c\circ d +c.
\end{align*}
After conjugating by $c$, we get 
\begin{align*}
    c-a+a\circ d-c\circ d &=  \left(c-a\right)\circ a\circ d-c\circ d\\
    &=  \left(\left(c-a\right)+ a\right)\circ d-c\circ d\\
    &=  c\circ d-c\circ d\\
    &= 0.
\end{align*}
Since the conjugation by $c$ is an automorphism, we have $a*b=c*d$.
\end{proof}

\begin{defn}
    We say that the braces $A$ and $B$ are \emph{isoclinic} 
    if there are two isomorphisms 
    $\xi\colon A/\Ann A\to B/\Ann B$ and $\theta\colon A'\to B'$ 
    such that 
    \begin{equation}
        \label{eq:isoclinism}
    \begin{tikzcd}
	{A'} & {(A/\Ann A)^2} & {A'} \\
	{B'} & {(B/\Ann B)^2} & {B'}
	\arrow["\phi_+^A"', from=1-2, to=1-1]
	\arrow["{\theta }", from=1-1, to=2-1]
	\arrow["{\phi_+^B}", from=2-2, to=2-1]
	\arrow["{\phi_*^A}", from=1-2, to=1-3]
	\arrow["{\phi_*^B}"', from=2-2, to=2-3]
	\arrow["\theta", from=1-3, to=2-3]
	\arrow["\xi\times\xi", from=1-2, to=2-2]
    \end{tikzcd}
    \end{equation}
    commutes. We call the pair $(\xi,\theta)$ a skew brace \emph{isoclinism}.
\end{defn}
\begin{notation}
    For $A$ a skew brace and $B$ and $C$ 
    sub skew braces, let 
   \[
   B+C=\lbrace b+c: b\in B, c\in C\rbrace.
   \]
\end{notation}

Note that, in general, 
if $B$ and $C$ are sub skew braces of $A$, then 
$B+C$ is not a skew brace. However, 
$B+C$ is always a skew brace if $C$ is an ideal.

\begin{pro}
\label{pro:lattice}
    Let $A$ and $B$ be isoclinic skew braces. Then sub skew braces of $A$ containing
    $\Ann(A)$ are in bijective correspondence with sub skew 
    braces of $B$ containing $\Ann(B)$. 
    Furthermore, these corresponding sub skew braces are isoclinic. 
\end{pro}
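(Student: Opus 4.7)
The plan is to combine the standard correspondence theorem for skew braces with the isoclinism data $(\xi,\theta)$. Since $\Ann(A)$ is an ideal, sub skew braces of $A$ containing $\Ann(A)$ are in bijection with sub skew braces of $A/\Ann(A)$ via $H\mapsto H/\Ann(A)$, and analogously for $B$. Applying the isomorphism $\xi$ between these quotients yields the desired bijection: to $H\supseteq\Ann(A)$ I associate the unique $K\supseteq\Ann(B)$ with $K/\Ann(B)=\xi(H/\Ann(A))$.

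To produce an isoclinism between corresponding $H$ and $K$, I would first check the containment $\Ann(A)\subseteq\Ann(H)$: for $a\in\Ann(A)\subseteq H$, the three defining conditions of $\Ann(H)$ are inherited because $a$ is central in both group structures of $A$ and $\lambda_a=\id_A$. Next, I would restrict $\theta$ to the commutator. The commuting diagram \eqref{eq:isoclinism} gives $\theta([h_1,h_2]_+)=[k_1,k_2]_+$ and $\theta(h_1*h_2)=k_1*k_2$ whenever $\xi(\overline{h_i})=\overline{k_i}$, so $\theta(H')\subseteq K'$ because $H'$ is additively generated by such elements; the symmetric argument with $(\xi^{-1},\theta^{-1})$ upgrades this to an isomorphism $\theta_H\colon H'\to K'$.

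The crux is to show that $\xi$ descends to an isomorphism $\tilde\xi\colon H/\Ann(H)\to K/\Ann(K)$. For this I would first establish the characterization that $h\in\Ann(H)$ if and only if $h*b=0$, $b*h=0$, and $[h,b]_+=0$ for every $b\in H$. The ``only if'' direction is immediate. For ``if'', the first and third conditions give $h\circ b=h+b=b+h$, while $b*h=0$ combined with $[h,b]_+=0$ forces $\lambda_b(h)=h$, hence $b\circ h=b+h=h\circ b$, placing $h$ in $\Soc(H)\cap Z(H,\circ)=\Ann(H)$. These conditions depend only on the class of $h$ in $H/\Ann(A)$ and are expressible as the vanishing of the restrictions of $\phi_+^A$ and $\phi_*^A$. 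Chasing \eqref{eq:isoclinism} and using that $\theta$ is an isomorphism (so it preserves ``equals zero''), these conditions on $\overline{h}\in H/\Ann(A)$ are equivalent to the analogous ones on $\xi(\overline{h})\in K/\Ann(B)$, which by the same characterization cut out $\Ann(K)/\Ann(B)$. Therefore $\xi(\Ann(H)/\Ann(A))=\Ann(K)/\Ann(B)$, and $\xi$ descends to the desired isomorphism $\tilde\xi$.

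Finally, commutativity of the isoclinism diagram for $(H,K)$ with the pair $(\tilde\xi,\theta_H)$ follows essentially for free, since $\phi_+^H$ and $\phi_*^H$ factor through the restrictions of $\phi_+^A$ and $\phi_*^A$ to $H/\Ann(A)$ composed with the projection $H/\Ann(A)\to H/\Ann(H)$, and symmetrically on the $K$ side. The main obstacle is the commutator-theoretic characterization of $\Ann(H)$; once this is in place, everything else amounts to diagram chasing through \eqref{eq:isoclinism}.
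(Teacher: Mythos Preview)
Your proposal is correct and follows the same strategy as the paper: set up the bijection via the correspondence theorem and $\xi$, restrict $\theta$ to the commutators using diagram \eqref{eq:isoclinism}, and show that $\xi$ descends to an isomorphism $H/\Ann(H)\to K/\Ann(K)$. The paper's proof simply asserts this last step, whereas you supply the missing argument by characterizing $\Ann(H)$ through the simultaneous vanishing of $\phi_+^A$ and $\phi_*^A$ (in both arguments) restricted to $H/\Ann(A)$; this is a genuine and necessary addition, since the claim that $\xi$ carries $\Ann(H)/\Ann(A)$ onto $\Ann(K)/\Ann(B)$ is exactly what needs justification.
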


\begin{proof}
    Let $(\xi,\theta)$ be an isoclinism between $A$ and $B$. The correspondence is given by $A_1\mapsto \pi^{-1}(\xi(A_1/\Ann(A))$ for all skew braces $\Ann(A)\subseteq  A_1\subseteq A$, where the map $\pi\colon B\to B/\Ann(B)$ is the canonical map. Let $\Ann(A)\subseteq A_1\subseteq A$ and $\Ann(B)\subseteq B_1\subseteq B$ be corresponding skew braces. Then, by the commutativity of the diagram \eqref{eq:isoclinism}, $\theta$ restricts to an isomorphism $A_1'\to B_1'$ and $\xi$ factors through an isomorphism $\overline{A_1}\to \overline{B_1}$. These two maps form an isoclinism from $A_1$ to $B_1$.
\end{proof}

\begin{pro}
    Let $A$ be a skew brace and $K$ be a sub skew brace of $A$. 
    Then $K$ is isoclinic to $K+\Ann(A)$. 
\end{pro}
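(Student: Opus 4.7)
Write $L = K + \Ann(A)$; since $\Ann(A)$ is an ideal, $L$ is a sub skew brace. My plan is to construct the isoclinism $(\xi,\theta)$ where $\xi$ is induced by the inclusion $K\hookrightarrow L$ and where $\theta$ is the identity map on $K' = L'$. This means I will need to verify (a) that this inclusion descends to an isomorphism $K/\Ann(K)\to L/\Ann(L)$, (b) that $K'=L'$ as subsets of $A$, and (c) that the resulting diagram \eqref{eq:isoclinism} commutes.

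For (a), I first record the useful identity $\lambda_b(a)=a$ for all $b\in A$ and $a\in\Ann(A)$: this follows from $a+b=a\circ b=b\circ a=b+\lambda_b(a)$ together with $a\in Z(A,+)$. A similar direct check using the three conditions defining the annihilator shows $\Ann(A)\subseteq\Ann(L)$ and $\Ann(K)\subseteq\Ann(L)$; for the latter I would write an arbitrary element of $L$ as $k'\circ a'$ with $k'\in K$ and $a'\in\Ann(A)$ (legitimate because $k'+a'=k'\circ a'$ as $\lambda_{k'}(a')=a'$) and reduce everything to the actions of $K$ and of $\Ann(A)$, which are controlled by the definition of $\Ann(K)$ and the identities above. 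Combined, these inclusions give that $K\to L/\Ann(L)$ is surjective (every $l\in L$ differs from some $k\in K$ by an element of $\Ann(A)\subseteq\Ann(L)$) with kernel exactly $\Ann(K)$ (the inclusion $\supseteq$ is what was just shown, and $\subseteq$ is immediate because $K\subseteq L$). Hence $\xi$ is a well-defined isomorphism.

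For (b), the central computations are the pair of identities
\[
[l_1,l_2]_+=[k_1,k_2]_+, \qquad l_1*l_2=k_1*k_2
\]
whenever $l_i=k_i+a_i$ with $k_i\in K$, $a_i\in\Ann(A)$. Both are verified by pushing the central elements $a_i$ to the outside using $a_i\in Z(A,+)$, rewriting $k_i+a_i$ as $k_i\circ a_i$ or $a_i\circ k_i$, and using centrality of $a_i$ in $(A,\circ)$. Because $L'$ and $K'$ are generated by additive commutators and $*$-products, this forces $L'=K'$, so $\theta=\id$ is legitimate. The same two identities then immediately yield the commutativity of both squares in \eqref{eq:isoclinism}. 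The main obstacle, and the step most prone to calculation error, is verifying the three annihilator conditions simultaneously when showing $\Ann(K)\subseteq\Ann(L)$; everything else reduces to straightforward symbol manipulation once the identity $\lambda_b(a)=a$ is in hand.
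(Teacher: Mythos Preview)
Your proof is correct and follows exactly the same approach as the paper: the paper's proof is the single sentence ``It is a direct consequence of the fact that $K'=(K+\Ann(A))'$ and that $K/\Ann(K)\simeq (K+\Ann(A))/\Ann(K+\Ann(A))$,'' and what you have written is precisely a careful verification of those two facts together with the resulting commutativity of diagram~\eqref{eq:isoclinism}. Your identification of the inclusion $\Ann(K)\subseteq\Ann(L)$ as the only nontrivial step is accurate, and your sketch for it (writing $l=k'\circ a'$ and reducing to the actions of $K$ and $\Ann(A)$ separately) is the right way through.
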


\begin{proof}
    It is a direct consequence of the fact that $K'=(K+\Ann(A))'$ and that 
    $K/\Ann(K)\simeq (K+\Ann(A))/\Ann(K+\Ann(A))$.
\end{proof}

\begin{pro}
    Let $A$ be a skew brace and $K$ be a sub skew brace of $A$. 
    If $A/\Ann(A)$ is finite, 
    then $A$ is isoclinic to $K$ if and only if $K+\Ann(A)=A$.
\end{pro}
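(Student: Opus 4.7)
The plan is to handle the two implications separately. The easier direction, that $K + \Ann(A) = A$ implies $A$ and $K$ are isoclinic, is immediate from the previous proposition: that result says $K$ is always isoclinic to $K + \Ann(A)$, which under our hypothesis equals $A$. Notably, this direction does not require the finiteness hypothesis on $A/\Ann(A)$.

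For the converse, I would assume $A$ and $K$ are isoclinic, so in particular there is an isomorphism $\xi\colon K/\Ann(K)\to A/\Ann(A)$, and let $\pi\colon A\to A/\Ann(A)$ be the canonical projection. I would first observe that $K+\Ann(A)=A$ is equivalent to $\pi(K)=A/\Ann(A)$, since $\Ann(A)\subseteq K+\Ann(A)$ forces $K+\Ann(A)=\pi^{-1}(\pi(K))$. Hence the task reduces to showing that $\pi|_K\colon K\to A/\Ann(A)$ is surjective.

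Next I would verify the inclusion $K\cap\Ann(A)\subseteq \Ann(K)$: any $x\in\Ann(A)$ commutes additively and multiplicatively with every element of $A$ and is fixed by every $\lambda$-automorphism of $A$, so \emph{a fortiori} this holds when restricted to $K$. This produces a surjection $K/(K\cap\Ann(A))\twoheadrightarrow K/\Ann(K)$, and combined with the obvious injection $K/(K\cap\Ann(A))\hookrightarrow A/\Ann(A)$ induced by $\pi$, I get the cardinality estimate
\[
|A/\Ann(A)|\geq |\pi(K)|=|K/(K\cap\Ann(A))|\geq |K/\Ann(K)|=|A/\Ann(A)|,
\]
where the last equality uses the isoclinism $\xi$. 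Since $A/\Ann(A)$ is finite, equality throughout forces $\pi(K)=A/\Ann(A)$, that is, $K+\Ann(A)=A$.

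The essential use of finiteness is precisely this last step: the cardinality comparison only forces equality of subset and ambient set when the latter is finite. Everything else is routine verification of the ideal axioms and isomorphism-theorem bookkeeping, so there is no genuine obstacle — the finiteness hypothesis is exactly what makes the counting argument conclude.
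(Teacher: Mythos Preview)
Your proof is correct and follows essentially the same cardinality argument as the paper. The paper's proof is terser---it writes the chain $|K/\Ann K|=|A/\Ann A|\geq |K/(K\cap \Ann(A))|\geq |K/\Ann K|$ without justifying the two inequalities---while you spell out the reasons (the embedding $K/(K\cap\Ann(A))\hookrightarrow A/\Ann(A)$ and the inclusion $K\cap\Ann(A)\subseteq\Ann(K)$) and also make the easy direction explicit via the preceding proposition.
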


\begin{proof}
    Assume $A$ is isoclinic to $K$. Then 
    \[|K/\Ann K|=|A/\Ann A|\geq |K/K\cap \Ann(A)|\geq |K/\Ann K|.
    \]
    Therefore, $(K+\Ann(A))/\Ann(A)=A/\Ann(A)$. Thus, $K+\Ann(A)=A$.
\end{proof}

\begin{pro}
    Let $A$ and $B$ be isoclinic skew braces with isoclinism $(\xi,\theta)$. 
    If $K\subseteq A'$ is an ideal of $A$, then $\theta(K)\subseteq B'$ is an ideal of $B$. In addition, $A/K$ is isoclinic to $B/\theta(K)$.
\end{pro}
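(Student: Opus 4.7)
The plan is to handle the two claims in succession: first to prove that $\theta(K)$ is an ideal of $B$, and then to build an isoclinism between $A/K$ and $B/\theta(K)$ from $(\xi,\theta)$. A preliminary identity I would establish before anything else is
\[
    \overline{\theta(k)}=\xi(\overline{k})\qquad\text{in }B/\Ann(B),\text{ for every }k\in A'.
\]
Both sides are additive in $k$, and $A'$ is additively generated by the elements $[a_1,a_2]_+$ and $a_1*a_2$, for which the identity is precisely what~\eqref{eq:isoclinism} gives, so it extends to all of $A'$.

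For the first claim, $\theta(K)\subseteq B'$ is already a sub skew brace, being the image of the sub skew brace $K$ under the isomorphism $\theta\colon A'\to B'$, so only the three ideal conditions remain. Fix $b\in B$ and $y=\theta(k)\in\theta(K)$, and pick $a\in A$ with $\xi(\overline{a})=\overline{b}$. The preliminary identity gives $\overline{y}=\xi(\overline{k})$, and the commutativity of~\eqref{eq:isoclinism} then yields
\[
    [b,y]_+=\theta([a,k]_+)\in\theta(K),\qquad b*y=\theta(a*k)\in\theta(K),
\]
because $[a,k]_+$ and $a*k$ lie in $K$ by the ideal property of $K$. The first yields additive normality, $b+y-b=[b,y]_++y\in\theta(K)$; the second yields $\lambda$-invariance, $\lambda_b(y)=b*y+y\in\theta(K)$. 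Circle normality I would then derive from these two by mimicking the computation that $A'$ is an ideal: expanding $b\circ y\circ b'=b+\lambda_b(y)+\lambda_{b\circ y}(b')$ and iterating reduces every summand to either an element of $\theta(K)$ or an expression of the form $b+(\text{element of }\theta(K))-b$, which lies in $\theta(K)$ by the additive normality just obtained.

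For the second claim, since $K\subseteq A'$ we have $(A/K)'=A'/K$ and $(B/\theta(K))'=B'/\theta(K)$, and $\theta$ descends to an isomorphism $\tilde\theta\colon A'/K\to B'/\theta(K)$. Let $K^*\subseteq A$ and $\theta(K)^*\subseteq B$ denote the preimages of $\Ann(A/K)$ and $\Ann(B/\theta(K))$; both contain the respective annihilators, and $(A/K)/\Ann(A/K)=A/K^*$, $(B/\theta(K))/\Ann(B/\theta(K))=B/\theta(K)^*$. The plan is to show that $\xi$ restricts to a bijection $K^*/\Ann(A)\to\theta(K)^*/\Ann(B)$; this descends $\xi$ to the required isomorphism $\tilde\xi\colon A/K^*\to B/\theta(K)^*$. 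The commutativity of the isoclinism diagram for $(\tilde\xi,\tilde\theta)$ is inherited from that of $(\xi,\theta)$, since the commutator and star functions on $A/K$ and $B/\theta(K)$ are the reductions modulo $K$ and $\theta(K)$ of those on $A$ and $B$.

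The hard part is the identification $\xi(K^*/\Ann(A))=\theta(K)^*/\Ann(B)$. Membership of $a\in A$ in $K^*$ is cut out by three conditions: that $[a,x]_+$, $a*x$ and $[a,x]_\circ$ all lie in $K$ for every $x\in A$. The first two transfer directly using~\eqref{eq:isoclinism} and the preliminary identity, yielding the analogous conditions for $\theta(K)^*$. The circle-commutator condition is not one of the morphisms appearing in~\eqref{eq:isoclinism}, so I would rewrite $a\circ x\circ a'\circ x'$ using $a\circ x=a+a*x+x$ and its iterates, expressing $[a,x]_\circ$ as an additive combination of $+$-commutators and $*$-products whose arguments depend only on $\overline{a},\overline{x}\in A/\Ann(A)$. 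Applying $\theta$ to this decomposition and invoking the preliminary identity then translates $[a,x]_\circ\in K$ into $[b,y]_\circ\in\theta(K)$ for every $y\in B$. Carrying out this rewriting cleanly is what I expect to be the main technical step of the argument.
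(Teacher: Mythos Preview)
Your proposal is correct and follows essentially the same route as the paper: establish the identity $\overline{\theta(k)}=\xi(\overline{k})$ on generators of $A'$, use the isoclinism diagram to transfer $[a,k]_+$ and $a*k$, identify $(A/K)'$ and $(B/\theta(K))'$, and characterise the preimages of the annihilators by the three conditions $[q,x]_+,\,q*x,\,[q,x]_\circ\in K$, transferring each across $\xi$.

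The one place where the paper does something slightly different is the handling of the $\circ$-commutator. Rather than rewriting $[a,x]_\circ$ inline each time, the paper packages this into the separate commutative diagram~\eqref{eq:isoclinismBis} for the map $\phi_\circ^X(\overline a,\overline b)=[a,b]_\circ$ (Proposition~\ref{pro: circlecomap}), proved by exhibiting an explicit expression for $[a,b]_\circ$ as an additive word in $+$-commutators and $*$-products. It then invokes that diagram both for circle normality of $\theta(K)$ and for the transfer of the condition $[q,x]_\circ\in K$. Your plan to do this rewriting by hand is exactly the content of that proposition, so the two arguments coincide in substance.

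One small point to tighten: in your derivation of circle normality you say it follows ``from these two'' (additive normality and $\lambda$-invariance). Expanding $b\circ y\circ b'$ as you suggest eventually produces a term $y*b'$, and $y*b\in\theta(K)$ does not follow from those two properties alone; it needs one more appeal to the diagram, using that $k*a\in K$ since $K$ is an ideal. With that extra line your computation goes through cleanly.
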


\begin{proof}
    First notice that for all elements $x\in A'$ we have $\xi(\overline{x})=\overline{\theta(x)}$. It is enough to verify it on the generators of $A'$. The commutativity of \eqref{eq:isoclinism} implies that for all $a,a_1\in A$, 
    \[\overline{\theta([a,a_1]_+)}= [\xi(\overline{a}),\xi(\overline{a_1})]_+,\quad
    \overline{\theta(a*a_1)}= \xi(\overline{a})*\xi(\overline{a_1}). 
    \]
    
    We show that $\theta(K)$ is an ideal of $B$. Clearly $\theta(K)_+$ is a subgroup of $B_+$ and $\theta(K)_\circ$ is a subgroup of $B_\circ$. Let $k\in K$. Let $b\in B$ and $a\in A$ be 
    such that 
    $\xi(\overline{a})=\overline{b}$. The commutativity 
    of the diagram \eqref{eq:isoclinism} implies that $\theta([a,k]_+)=[b,\theta(k)]_+$ and $\theta(a*k)=b*\theta(k)$. Thus, $b+\theta(k)-b=\theta(a+k-a)\in \theta(K)$ and $\lambda_b(\theta(k))=\theta(\lambda_a(k))$. Therefore $\theta(K)$ is a left ideal of $B$ that
    is normal in the additive group of $B$. Similarly, one shows that $K_\circ$ is a normal subgroup of $B_\circ$ using the commutativity of the diagram \eqref{eq:isoclinismBis}. 
    It is left to see that $A/K$ is isoclinic to $B/\theta(K)$. Firstly, $(A/K)'=A'/K$ and $(B/\theta(K))'=B'/\theta(K)$. Thus $\theta$ factors through an isomorphism $\overline{\theta}\colon (A/K)'\to (B/\theta(K))'$. The annihilator of $A/K$ corresponds by the canonical homomorphism to an ideal $Q$ of $A$ such 
    that 
    \[
    \Ann(A)+K\subseteq Q\subseteq A.
    \]
    Similarly, the annihilator of $B/\theta(K)$ corresponds to an ideal $Q_1$ of $B$ such that 
    \[
    \Ann(B)+\theta(K)\subseteq Q_1\subseteq B.
    \]
    Thus 
    \begin{align*}
    &\overline{A/K}\simeq A/Q\simeq \overline{A}/(Q/\Ann(A)),
    &\overline{B/\theta(K)}\simeq B/Q_1\simeq \overline{B}/(Q_1/\Ann(B)).
    \end{align*}
    
    It is left to see that $\xi(Q/\Ann(A))=Q_1/\Ann(B)$. Let $q\in Q$, then $[q,x]_+\in K$, $q*x\in K$ and $[q,x]_\circ\in K$ for all $x\in A$. Let $q_1\in B$ be a representative of $\xi(\overline{q})$. By the commutativity of the diagrams \eqref{eq:isoclinism} and \eqref{eq:isoclinismBis}, $[q_1,x]_+\in\theta(K)$, $q_1*x\in\theta(K)$ and $[q_1,x]_\circ\in \theta(K)$ for all $x\in B$. By a symmetric argument, 
    $Q_1/\Ann(B)\subseteq\xi(Q/\Ann(A))$. The commutativity of \eqref{eq:isoclinism} is straightforward.
\end{proof}

\begin{pro}
    \label{pro:kinta}
    Let $A$ be a skew brace and $K$ an ideal of $A$. Then $A/K$ is isoclinic to $A/(K\cap A')$.
\end{pro}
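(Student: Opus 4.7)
The plan is to set $L := K \cap A'$ (an ideal of $A$, being the intersection of two ideals) and to exhibit a common ``resolution'' for both quotients: a single ideal $Q \supseteq K$ of $A$ such that
\[
(A/K)/\Ann(A/K) \;=\; A/Q \;=\; (A/L)/\Ann(A/L),
\]
together with a canonical identification $(A/K)' = A'/L = (A/L)'$. These two identifications will serve as the isomorphisms $\xi$ and $\theta$ required by the isoclinism.

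For the derived ideals, $(A/L)' = A'/L$ directly, since $L \subseteq A'$, while $(A/K)' = (A'+K)/K \cong A'/(A'\cap K) = A'/L$ by the second isomorphism theorem. For the $\Ann$-quotients, I will observe that for any ideal $J$ of $A$, the preimage in $A$ of $\Ann(A/J)$ is
\[
\{a \in A : a*x,\;[a,x]_+,\;[a,x]_\circ \in J\text{ for every }x \in A\}.
\]
The key point is that each of these three elements always lies in $A'$: the first two by the very definition of $A'$, and $[a,x]_\circ$ because $A/A'$ is a trivial abelian brace (in the quotient the star product vanishes, so $\circ$ and $+$ coincide, and $+$ is commutative since additive commutators are killed). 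Hence, for elements of this form, belonging to $K$ is equivalent to belonging to $K\cap A' = L$, which forces the preimages of $\Ann(A/K)$ and $\Ann(A/L)$ in $A$ to coincide; I will call this common ideal $Q$.

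Defining $(\xi,\theta)$ as the two canonical identifications above, both induced by the identity on $A$ (respectively on $A'$) through the natural projections, the commutativity of the diagram \eqref{eq:isoclinism} becomes transparent: both $\phi^{A/K}_{+}$ and $\phi^{A/L}_{+}$ send a pair of classes $(\overline{a},\overline{b})$ to the class of $[a,b]_+ \in A'$ inside $A'/L$, and likewise for $\phi_{*}$; so the squares commute on the nose. The one genuinely non-routine ingredient, which I expect to be the main obstacle, is the inclusion $[a,x]_\circ \in A'$ for all $a,x \in A$, requiring a small detour through the abelianisation $A^{\ab}$; once this is in hand, the rest of the argument is pure bookkeeping.
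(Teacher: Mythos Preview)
Your proposal is correct and follows essentially the same approach as the paper: identify the derived ideals via the second isomorphism theorem, show that the preimages in $A$ of $\Ann(A/K)$ and $\Ann(A/(K\cap A'))$ coincide because the elements $a*x$, $[a,x]_+$, $[a,x]_\circ$ all lie in $A'$, and conclude with the canonical identifications. Your justification of $[a,x]_\circ\in A'$ via the triviality and abelianness of $A^{\ab}$ is actually more explicit than the paper's proof, which simply invokes ``the definition of $A'$'' at that step.
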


\begin{proof}
    We have that $(A/K)'= (A'+K)/K\simeq A'/(K\cap A')=(A/(K\cap A'))'$. The annihilator of $A/K$ corresponds 
    to an ideal $Q$ of $A$ such that
    $\Ann(A)+K\subseteq Q\subseteq A$. Similarly, 
    the annihilator of $A/(K\cap A')$ corresponds to an ideal $Q_1$ of $A$ such that 
    $\Ann(A)+K\cap A'\subseteq Q_1\subseteq A$. Thus $\overline{A/K}\simeq A/Q$ and $\overline{A/K\cap A'}\simeq A/Q_1$. It is left to see that $Q=Q_1$. It is clear that $Q_1\subseteq Q$. Let $q\in Q$. Then $[q,x]_+\in K$, $q*x\in K$ and $[q,x]_\circ\in K$ for all $x\in A$. In addition, by the definition of $A'$, $[q,x]_+\in A'$, $q*x\in A'$ and $[q,x]_\circ\in A'$ for all $x\in A$. Thus $q\in Q_1$. The commutativity of \eqref{eq:isoclinism} is straightforward.
\end{proof}

\begin{cor}
    Let $A$ be a finite skew brace and $K$ an ideal of $A$. 
    Then $A$ is isoclinic to $A/K$ if and only if $K\cap A'= \lbrace 0\rbrace$.
\end{cor}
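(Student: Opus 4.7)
The plan is to handle the two directions separately, leveraging Proposition~\ref{pro:kinta} throughout. For the \emph{if} direction, assume $K\cap A'=\{0\}$. Then $A/(K\cap A')=A/\{0\}=A$, and Proposition~\ref{pro:kinta} directly gives that $A/K$ is isoclinic to $A/(K\cap A')=A$. No finiteness is needed here.

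For the \emph{only if} direction, assume $A$ is isoclinic to $A/K$ via some pair $(\xi,\theta)$. The essential piece of information is that $\theta\colon A'\to (A/K)'$ is an isomorphism, so $|A'|=|(A/K)'|$. From the computation appearing in the proof of Proposition~\ref{pro:kinta}, one has $(A/K)'=(A'+K)/K\cong A'/(K\cap A')$. Comparing cardinalities yields $|A'|=|A'/(K\cap A')|$, and since $A$ (and hence $A'$) is finite, we conclude $|K\cap A'|=1$, i.e.\ $K\cap A'=\{0\}$.

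The argument is essentially formal once Proposition~\ref{pro:kinta} is in hand; the only place where the finiteness hypothesis is truly used is in passing from equality of orders to triviality of the subgroup $K\cap A'$. In the infinite setting one could only deduce the abstract isomorphism $A'\cong A'/(K\cap A')$, which does not force $K\cap A'=\{0\}$, so there is no hidden obstacle to be overcome beyond quoting the previous proposition.
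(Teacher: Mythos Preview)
Your proof is correct and follows essentially the same approach as the paper: the \emph{if} direction is obtained directly from Proposition~\ref{pro:kinta}, and the \emph{only if} direction compares $|A'|$ with $|(A/K)'|=|A'/(K\cap A')|$ using finiteness. The only cosmetic difference is that the paper phrases the latter direction via the contrapositive, but the content is identical.
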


\begin{proof}
    If $K\cap A'\neq \lbrace 0\rbrace$, then 
    \[|(A/K)'|=|A'/(K\cap A')|=|A'|/|K\cap A'|<|A'|. 
    \]
    Thus $A$ cannot be isoclinic to $A/K$.
    The other implication is a direct consequence of
    Proposition \ref{pro:kinta}.
\end{proof}

\begin{rem}
    Let $A$, $A_1$, $B$ and $B_1$ be skew braces. If $A$ is isoclinic to $A_1$ and $B$ is isoclinic to $B_1$, then $A\times B$ is isoclinic to $A_1\times B_1$.
\end{rem}

The following definition is motivated by 
its group-theoretic analog. 

\begin{defn}
    A skew brace $A$ such that $\Ann(A)\subseteq A'$ 
    will be called a \emph{stem skew brace}. 
\end{defn}

\begin{pro}
    Two isoclinic stem skew braces have the same order.
\end{pro}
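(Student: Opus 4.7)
The plan is to show that for isoclinic stem skew braces $A$ and $B$, the three cardinalities $|A/\Ann(A)|$, $|A'|$, and $|A'/\Ann(A)|$ are all preserved, and then exploit the stem hypothesis $\Ann(A)\subseteq A'$ (which makes $A'/\Ann(A)$ a subgroup of $A/\Ann(A)$) to recover $|A|$ via Lagrange.

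Let $(\xi,\theta)$ be an isoclinism between $A$ and $B$. The existence of the bijections $\xi$ and $\theta$ gives immediately $|A/\Ann(A)|=|B/\Ann(B)|$ and $|A'|=|B'|$. The main step is to verify that $\xi$ restricts to a bijection
\[
\xi|_{A'/\Ann(A)}\colon A'/\Ann(A)\longrightarrow B'/\Ann(B).
\]
For this, I would first check on generators that $\xi(\overline{x})=\overline{\theta(x)}$ for every $x\in A'$. Since $\xi$ is a skew brace homomorphism,
\[
\xi(\overline{[a,b]_+})=[\xi(\overline{a}),\xi(\overline{b})]_+=\phi_+^B(\xi(\overline{a}),\xi(\overline{b})),
\]
and commutativity of the left-hand square of \eqref{eq:isoclinism} identifies this with $\overline{\theta([a,b]_+)}$. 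The analogous argument with the right-hand square handles generators of the form $a*b$. Extending additively yields $\xi(\overline{x})=\overline{\theta(x)}\in B'/\Ann(B)$ for every $x\in A'$, so $\xi(A'/\Ann(A))\subseteq B'/\Ann(B)$; applying the same reasoning to $(\xi^{-1},\theta^{-1})$ gives equality.

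Finally, because $A$ and $B$ are stem, $\Ann(A)$ is contained in $A'$ and $\Ann(B)$ in $B'$, so Lagrange gives
\[
|\Ann(A)|=\frac{|A'|}{|A'/\Ann(A)|}=\frac{|B'|}{|B'/\Ann(B)|}=|\Ann(B)|,
\]
and then $|A|=|A/\Ann(A)|\cdot|\Ann(A)|=|B/\Ann(B)|\cdot|\Ann(B)|=|B|$, as desired (reading the identities as cardinal equalities takes care of the infinite case uniformly).

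The main obstacle is the verification that $\xi$ carries $A'/\Ann(A)$ to $B'/\Ann(B)$: this is precisely the content one needs to extract from the diagram \eqref{eq:isoclinism}, and although the argument reduces cleanly to checking on the generators $[a,b]_+$ and $a*b$, it is where the definition of isoclinism must be used in full force. Everything else is group-theoretic bookkeeping using the stem condition.
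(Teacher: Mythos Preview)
Your core computation---establishing $\xi(\overline{x})=\overline{\theta(x)}$ for all $x\in A'$ by checking on the generators $[a,b]_+$ and $a*b$---is exactly the identity the paper uses. The difference lies only in how the conclusion is extracted from it, and there your argument has a genuine gap in the infinite case.

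Your final step reads
\[
|\Ann(A)|=\frac{|A'|}{|A'/\Ann(A)|}=\frac{|B'|}{|B'/\Ann(B)|}=|\Ann(B)|,
\]
and you assert that ``reading the identities as cardinal equalities takes care of the infinite case uniformly.'' It does not: cardinal arithmetic admits no cancellation law, so from $|A'|=|A'/\Ann(A)|\cdot|\Ann(A)|$ together with $|A'|=|B'|$ and $|A'/\Ann(A)|=|B'/\Ann(B)|$ one cannot infer $|\Ann(A)|=|\Ann(B)|$ when these cardinals are infinite (e.g.\ $\aleph_0=\aleph_0\cdot 1=\aleph_0\cdot\aleph_0$).

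The paper avoids this by using the identity $\xi(\overline{x})=\overline{\theta(x)}$ more directly. Since $A$ is stem, $\Ann(A)\subseteq A'$, so for $x\in\Ann(A)$ one has $\overline{\theta(x)}=\xi(\overline{x})=\xi(\overline{0})=\overline{0}$, i.e.\ $\theta(x)\in\Ann(B)$. Symmetrically $\theta^{-1}(\Ann(B))\subseteq\Ann(A)$, so $\theta$ restricts to a bijection $\Ann(A)\to\Ann(B)$. Then $|A|=|A/\Ann(A)|\cdot|\Ann(A)|=|B/\Ann(B)|\cdot|\Ann(B)|=|B|$ holds as a product of equal factors, with no division needed. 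Your work already contains everything required for this; you simply took a detour through $A'/\Ann(A)$ where the paper goes straight to $\Ann(A)$.
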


\begin{proof}
    Let $A$ and $B$ be isoclinic stem skew braces with isoclinism $(\xi,\theta)$. As $\Ann(A)\subseteq A'$, $\overline{\theta(x)}=\xi(\overline{x})$ for all $x\in \Ann(A)$. Thus, $\theta(\Ann(A))\subseteq \Ann(B)$. A symmetric argument shows that $\theta$ restricts to an isomorphism of $\Ann(A)$ and $\Ann(B)$. The statement is then a consequence of $A/\Ann(A)\simeq B/\Ann(B)$.
\end{proof}

\begin{thm}
\label{thm:stem}
    Every skew brace is isoclinic to a stem skew brace.
\end{thm}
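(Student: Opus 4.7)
My plan proceeds by exhibiting a stem sub skew brace of $A$ in the isoclinism class of $A$. Concretely, I would pick $K\leq A$ to be a sub skew brace of minimum order among those satisfying $K+\Ann(A)=A$; existence is by descent on $|A|$ in the finite case and by Zorn's lemma in general. By the proposition preceding the theorem, such a $K$ is isoclinic to $K+\Ann(A)=A$, so $K$ is the desired candidate once we verify the stem condition $\Ann(K)\subseteq K'$.

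The first step is to verify $\Ann(K)\subseteq \Ann(A)\cap K$. Given $z\in \Ann(K)$ and $a\in A$, the decomposition $A=K+\Ann(A)$, combined with $\Ann(A)\subseteq \ker\lambda$, yields $a=k+n=k\circ n$ with $k\in K$ and $n\in \Ann(A)$ (the equality $k+n=k\circ n$ uses $\lambda_k(n)=n$, a consequence of $n$ being fixed by every $\lambda$). Thus $\lambda_a=\lambda_k\circ\lambda_n=\lambda_k$, which fixes $z$ since $z\in \ker\lambda|_K\cap Z(K,+)\cap Z(K,\circ)$. Direct computations using that $n$ is central in $A$ then show $z\in Z(A,+)\cap Z(A,\circ)$ and $\lambda_z=\operatorname{id}_A$, placing $z\in \Ann(A)$.

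The main step, and the principal obstacle, is to establish $\Ann(A)\cap K\subseteq K'$. Combined with the first step and the identity $K\cap A'=K'$ — which follows in the finite case from the isoclinism-induced isomorphism $K'\simeq A'$ together with $K'\subseteq K\cap A'\subseteq A'$ — this will give $\Ann(K)\subseteq K'$. Supposing for contradiction that there is $z\in (\Ann(A)\cap K)\setminus K'$, my strategy is to produce a proper sub skew brace $L\subsetneq K$ with $L+\Ann(A)=A$, contradicting the minimal choice of $K$. The natural candidate is a sub skew brace complementing $\langle z\rangle$ in the central extension $0\to \langle z\rangle\to K\to K/\langle z\rangle\to 0$: such an $L$ satisfies $L+\langle z\rangle=K$ and $L\cap \langle z\rangle=\{0\}$, whence $L+\Ann(A)\supseteq L+\langle z\rangle+\Ann(A)=K+\Ann(A)=A$ and $|L|<|K|$. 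Existence of such a splitting as a sub skew brace (rather than merely a subgroup) is a cocycle-representation problem, analogous to Hall's construction of stem groups; for finite skew braces it is resolved by the theory of Schur covers developed in \cite{LV}, where precisely this kind of splitting is produced.
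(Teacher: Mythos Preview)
Your approach has a genuine gap: the minimal sub skew brace $K$ with $K+\Ann(A)=A$ need not be stem, and the complement you invoke in the last paragraph does not exist in general. Take $A$ to be the trivial skew brace on the group
\[
G=\langle a,b\mid a^{4}=b^{4}=1,\ bab^{-1}=a^{-1}\rangle
\]
of order $16$. Here $\Ann(A)=Z(G)=\langle a^{2},b^{2}\rangle\cong C_{2}\times C_{2}$ while $A'=G'=\langle a^{2}\rangle\cong C_{2}$, so $A$ is not stem. One checks that $\Phi(G)=\langle a^2,b^2\rangle=Z(G)$; hence every proper sub skew brace $L\subsetneq A$ satisfies $L+\Ann(A)\subsetneq A$, and your minimal $K$ is $A$ itself. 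Moreover, the three maximal subgroups of $G$ are all isomorphic to $C_{4}\times C_{2}$, so $G$ contains no copy of $D_{8}$ or $Q_{8}$; thus no sub skew brace of $A$ whatsoever is a stem representative of its isoclinism class. Your proposed complement of $\langle z\rangle$ (here $z=b^{2}\in\Ann(K)\setminus K'$) simply fails to exist inside $K$, and no theory of Schur covers will produce it. The reference to \cite{LV} does not help: Schur covers sit \emph{above} $A$, not below it.

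The paper avoids this obstruction by going up rather than down. It replaces $A$ by $A\times G$ with $G$ free abelian on a generating set of $A$, and passes to the sub skew brace $A_{1}$ generated by the diagonal elements $(\xi_i,\eta_i)$. The key point is that $A_{1}/A_{1}'\cong G$ is \emph{free} abelian, so $\Ann(A_{1})/(\Ann(A_{1})\cap A_{1}')$, being a subgroup of a free abelian group, is itself free and therefore splits off as a direct factor $K$ of $\Ann(A_{1})$. The stem skew brace is then the quotient $A_{1}/K$. Freeness of the abelianisation is exactly what guarantees the splitting your argument is missing; staying inside the original $A$ you have no control over the abelianisation and the splitting can fail, as the example above shows.
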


\begin{proof}
    Let $A$ be a skew brace and $I$ a set of indices (possibly uncountable). Let $\lbrace \xi_i:i\in I\rbrace$ be a set of generators of $A$ (as a skew brace). Let $G$ be the free abelian group generated by $\lbrace \eta_i:i\in I\rbrace$. Then $A$ is isoclinic to $A\times G$. Let $A_1\subseteq A\times G$ be the skew brace generated by $\lbrace (\xi_i,\eta_i):i\in I\rbrace$. Then clearly $A_1+G$ coincides with $A\times G$. Since $G$ is contained in the annihilator of $A\times G$, it follows that $A_1$ is isoclinic to $A\times G$. Thus $A_1$ is isoclinic to $A$. Let $\pi\colon A_1\to G$ denote the projection on the second coordinate. Then $\pi$ factors through a morphism of abelian groups $\overline{\pi}\colon A_1/A_1'\to G$. By the universal property of the free abelian group $G$, there is a homomorphism $\phi\colon G\to A_1/A_1'$ that maps $\eta_i$ to $\overline{(\xi_i,\eta_i)}$ for all $i\in I$. In fact, $\phi$ is the inverse map of $\overline{\pi}$. Therefore $A_1/A_1'\simeq G$. Moreover, 
    \[
    \Ann(A_1)/(\Ann(A_1)\cap A_1')\simeq (\Ann(A_1)+A_1')/A_1',
    \]
    that is $\Ann(A_1)/(\Ann(A_1)\cap A_1')$ is isomorphic to a subgroup of a free abelian group. Thus $\Ann(A_1)$ is the direct product of $\Ann(A_1)\cap A_1'$ with another group $K$. Since $K\cap A_1'=\lbrace 0\rbrace$, the skew brace $A_1/K$ is isoclinic to $A_1$. The annihilator of $A_1/K$ is $\Ann(A_1)/K$ and is contained in the commutator, which is $(A_1'+K)/K$.
\end{proof}

For $n\geq2$, let $C_n$ be the cyclic group of order $n$.

\begin{notation}
    Let $n\geq 2$ be an integer. Let $d$ be an integer such that $d|n$ and every prime divisor of $n$ divides $d$. We will denote by $C(n,d)$ the skew brace $C_n$ with multiplication defined as $x\circ y= x+dxy+y$. 
\end{notation}

The skew braces $C(n,d)$ appear in the work of Rump \cite{MR2298848}. 

\begin{exa}
Let $m,n>2$ be integers,
We have that $a*b=m^{n-1}ab$ for all $a,b\in C(m^n,m^{n-1})$.
    The commutator is the set of multiples of $m^{n-1}$ and the annihilator is the set of multiples of $m$. They coincide if and only if $n=2$. Thus $C(m^2,m)$ is a stem skew brace. 
    It is straightforward to see that $C(m^2,m)$ is isoclinic to 
    $C(m^n,m^{n-1})$ for all $n>1$.
\end{exa}

\section{Isoclinism Invariants}
\begin{thm}
\label{thm:trivial}
    Let $A$ and $B$ be skew braces. If $A$ and $B$ are isoclinic then $A$ is trivial
    if and only if $B$ is trivial.
\end{thm}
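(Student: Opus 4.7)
The plan is to reduce the theorem to the following criterion: a skew brace $C$ is trivial if and only if the map $\phi_*^C$ from the preceding lemma is identically zero. One direction is immediate, since by definition $a*b = -a + a\circ b - b$, so the vanishing of $a*b$ for all $a,b \in C$ is literally equivalent to $a\circ b = a+b$ for all $a,b$. For the converse, I would emphasize that although $\phi_*^C$ is defined on $(C/\Ann C)^2$, its output is the actual element $a*b \in C$, not a class; hence $\phi_*^C \equiv 0$ forces $a*b = 0$ for every pair $a,b \in C$, since every element of $C$ has a well-defined image in $C/\Ann C$.

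With this characterization in hand, the theorem becomes a one-step diagram chase in the right-hand square of \eqref{eq:isoclinism}. Let $(\xi, \theta)$ be an isoclinism between $A$ and $B$ and assume $A$ is trivial, so that $\phi_*^A \equiv 0$. Then $\theta \circ \phi_*^A \equiv 0$, and commutativity of the right square forces $\phi_*^B \circ (\xi \times \xi) \equiv 0$. Because $\xi$ is an isomorphism, $\xi \times \xi$ is a bijection onto $(B/\Ann B)^2$, so $\phi_*^B \equiv 0$, and the criterion yields that $B$ is trivial. The reverse implication is identical after swapping the roles of $A$ and $B$ via the inverse isoclinism $(\xi^{-1}, \theta^{-1})$.

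I do not expect any real obstacle. The only subtle point to verify carefully is the equivalence between $C$ being trivial and $\phi_*^C$ being identically zero, which hinges on the observation that the values of $\phi_*$ are honest elements of the ambient skew brace rather than cosets. Once this is isolated, everything else reduces to the mechanical fact that the isoclinism diagram transports the vanishing of $\phi_*^A$ to the vanishing of $\phi_*^B$.
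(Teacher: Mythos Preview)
Your proposal is correct and follows essentially the same approach as the paper: the paper's proof is the single sentence ``It is a direct consequence of the commutativity of the right part of the diagram \eqref{eq:isoclinism},'' and your argument is precisely the unpacking of that sentence via the criterion that $C$ is trivial iff $\phi_*^C\equiv 0$.
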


\begin{proof}
    It is a direct consequence of the commutativity of the right part of the diagram \eqref{eq:isoclinism}.
\end{proof}

\begin{rem}
    Two trivial skew braces are isoclinic if and only if their underlying groups are isoclinic. Therefore,
we will not distinguish between the two notions when dealing with trivial skew braces.
\end{rem}

\begin{rem}
    Let $A$ and $B$ be isoclinic skew braces. The quotients 
    $A/\Ann(A)$ and $B/\Ann(B)$ are isomorphic. It follows
    that $A$ is annihilator nilpotent if and only if $B$ is 
    annihilator nilpotent. 
\end{rem}

A skew brace $A$ is said to be \emph{right nilpotent} 
if $A^{(n)}=\{0\}$ for some $n$, where 
$A^{(1)}=A$ and $A^{(k+1)}=A^{(k)}*A$ for all $k$. 

\begin{thm}
\label{thm:right_nilpotent}
Let $A$ and $B$ be isoclinic skew braces. 
Then $A$ is right nilpotent if and only if $B$ is right nilpotent.
\end{thm}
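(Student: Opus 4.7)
The plan is to show that the isomorphism $\theta\colon A'\to B'$ provided by the isoclinism restricts, for every $k\ge 2$, to a bijection between the ideals $A^{(k)}$ and $B^{(k)}$; from this the equivalence of right nilpotency follows immediately, since $A^{(n)}=\{0\}$ (for $n\ge 2$) if and only if $\theta(A^{(n)})=B^{(n)}=\{0\}$, and the case $n=1$ collapses to $A=\{0\}$, handled separately.

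The key observation is that $A^{(k)}\subseteq A^{(2)}\subseteq A'$ for every $k\ge 2$, so that elements of $A^{(k)}$ have well-defined classes in $A/\Ann(A)$ and their images under $\theta$ can be computed via the right square of the diagram \eqref{eq:isoclinism}. Concretely, for any $x\in A$ and $a\in A$, the defining relation $\phi_*^A(\overline{x},\overline{a})=x*a$ together with the commutativity of that square gives
\[
\theta(x*a)=\phi_*^B(\xi(\overline{x}),\xi(\overline{a})).
\]
Combined with the fact, already noted in an earlier proof in the paper, that $\xi(\overline{x})=\overline{\theta(x)}$ whenever $x\in A'$, this formula will let us rewrite $\theta(x*a)$ as an honest star product in $B$ of elements of the expected subgroups.

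I would then proceed by induction on $k\ge 2$ to establish $\theta(A^{(k)})=B^{(k)}$. For the base case $k=2$, the subgroup $A^{(2)}$ is additively generated by the elements $\phi_*^A(\overline{a},\overline{b})$ as $(a,b)$ ranges over $A\times A$; since $\xi$ is surjective, $\theta$ sends these generators onto the generators $\phi_*^B(\overline{c},\overline{d})=c*d$ of $B^{(2)}$. For the inductive step, assume $\theta(A^{(k)})=B^{(k)}$ and take a generator $x*a$ of $A^{(k+1)}$ with $x\in A^{(k)}$, $a\in A$. Choosing $y:=\theta(x)\in B^{(k)}$ as a representative of $\xi(\overline{x})$ and any lift $b$ of $\xi(\overline{a})$, the displayed identity yields $\theta(x*a)=y*b\in B^{(k+1)}$; conversely, every generator $y*b$ of $B^{(k+1)}$ arises this way by lifting $y$ via the inductive hypothesis and $\overline{b}$ via $\xi$. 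Since $\theta$ is a group isomorphism, this identifies the additive subgroups generated, giving the claim.

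I don't expect any real obstacle: the whole argument is driven mechanically by the right-hand square of \eqref{eq:isoclinism} once one verifies that $A^{(k)}\subseteq A'$ for $k\ge 2$. The only mild subtlety is keeping track of the lifts of $\xi$-classes, which is exactly why we need the previously established identity $\xi(\overline{x})=\overline{\theta(x)}$ on $A'$; without it the inductive step could not match the specific subgroup $B^{(k)}$ on both sides.
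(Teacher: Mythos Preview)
Your proposal is correct and follows a genuinely different route from the paper. The paper does not track the ideals $A^{(k)}$ directly; instead it invokes two external results from \cite{MR3957824} to the effect that a skew brace $A$ is right nilpotent if and only if $A/\Soc(A)$ is, and then reduces the theorem to proving $A/\Soc(A)\simeq B/\Soc(B)$. That isomorphism is obtained by checking, via the diagram~\eqref{eq:isoclinism}, that $\xi(\Soc(A)/\Ann(A))=\Soc(B)/\Ann(B)$. Your argument is more self-contained---it needs nothing beyond the isoclinism data and the identity $\xi(\overline{x})=\overline{\theta(x)}$ on $A'$ already established in the paper---and it yields the sharper conclusion that the right-nilpotency class itself (the least $n\ge 2$ with $A^{(n)}=\{0\}$) is an isoclinism invariant, since you exhibit $\theta(A^{(k)})=B^{(k)}$ for every $k\ge 2$. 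The paper's route, on the other hand, produces the side fact $A/\Soc(A)\simeq B/\Soc(B)$, which is of independent interest.
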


\begin{proof}
    Let $(\xi,\theta)$ be an isoclinism between 
    $A$ and $B$. 
    Assume that $A$ is right nilpotent. By \cite[Lemma 2.5]{MR3957824}, 
    $A/\Soc(A)$ is right nilpotent. 
    
    We claim that 
    \begin{equation}
    \label{eq:Soc/Ann}
        \xi(\Soc(A)/\Ann(A))=\Soc(B)/\Ann(B).
    \end{equation}
    Let $a\in \Soc(A)$
    be such that $\xi(a+\Ann(A))=b+\Ann(B)$ for some $b\in B$. Let $a_1\in A$ and $b_1\in B$ such that $\xi(a_1+\Ann(A))=b_1+\Ann(B)$. By the commutativity of \eqref{eq:isoclinism}, 
    \begin{align*}
    b*b_1&=\phi_*^B(b+\Ann(B),b_1+\Ann(B))\\
    &=\theta(\phi_*^A(a+\Ann(A),a_1+\Ann(A)))\\
    &=\theta(a*a_1)=0.
    \end{align*}
    Similarly, $b$ is central in $(B,+)$. Now \eqref{eq:Soc/Ann} follows. 

    It follows from the first isomorphism theorem that 
    \[
    (A/\Ann(A))/(\Soc(A)/\Ann(A))\simeq (B/\Ann(B))/(\Soc(B)/\Ann(B)). 
    \]
    Hence $A/\Soc(A)\simeq B/\Soc(B)$. Thus $B/\Soc(B)$ is right nilpotent.  
    By \cite[Proposition 2.17]{MR3957824}, $B$ is right nilpotent.    
\end{proof}

We now present the notions of terms and term functions from universal algebra in the context of skew braces. A more general approach can be found in \S10, and \S11 of the book \cite{sankappanavar1981course}.

     Let $X$ be a set of objects called variables. 
     The set of \emph{skew brace terms} over $X$ is the smallest set $T(X)$ such that
\begin{enumerate}
    \item $X \cup \lbrace 0\rbrace \subseteq T(X)$, where $0$ is a formal element, and 
    \item if $p_1,p_2 \in T(X)$, then the ``strings" 
    $p_1\circ p_2,p_1+p_2,-p_1,p_2'\in T(X)$.
\end{enumerate}

For $p\in T(X)$ we write $p$ as $p(x_1, \dots, x_n)$ 
to indicate that the variables occurring in $p$ are among $x_1, \dots, x_n$. We say that a skew brace term $p$ is $n$-ary if the number of variables appearing explicitly in $p$ is $\leq n$.

Given a skew brace term $p(x_1,\dots,x_n)$ over some 
set of variables $X$ and 
a skew brace $A$, we define a map $p^A \colon A^n \to A$ inductively
as follows:
\begin{enumerate}
    \item if $p$ is a variable $x_i\in X$, then $p^A(a_1, \dots, a_n) = a_i$
        for all $a_1, \dots, a_n \in A$;
    \item if $p$ is of the form $p_1(x_1,\dots, x_n)\star p_2(x_1,\dots, x_n)$, then
        \begin{align*}
        &p^A(a_1, \dots, a_n) = p^A_1(a_1, \dots, a_n)\star p^A_2(a_1,\dots, a_n),
        \end{align*}
        where $\star$ denotes either $\circ$ or $+$;  
    \item if $p$ is of the form $p_1(x_1,\dots, x_n)'$, then 
        $p^A(a_1, \dots, a_n) = p^A_1(a_1, \dots, a_n)'$;
   \item if $p$ is of the form $-p_1(x_1,\dots, x_n)$, then 
        $p^A(a_1, \dots, a_n) = -p^A_1(a_1, \dots, a_n)$. 
\end{enumerate}

\begin{exa}
    Let $X=\lbrace x,y\rbrace$, then $p(x,y)=-x+x\circ y -y$ is a skew brace term over $X$. Given a skew brace $A$, its term function over $A$ is the map 
    $p^A\colon A^2\to A$, $(a,b)\mapsto a*b$. 
\end{exa}

\begin{notation}
   Let $A$ be a skew brace. 
   From now on, we denote by $\overline{A}$ the quotient $A/\Ann A$.
\end{notation}

\begin{lem}
\label{lem: natrans}
Let $n,m$ be integers, $\eta_1,\dots, \eta_{2m}$ be $n$-ary skew brace terms and $p$ be an $m$-ary skew brace term. Let $\phi_1,\dots, \phi_m$ be collections of maps where each $\phi_i$ is either $\phi_+$ or $\phi_*$. Then one can construct a collection of well-defined maps $\phi$ that associates to every skew brace $B$ a map $\phi^B\colon \overline{B}^n\to B'$ such that
    \begin{equation*}
      \phi^B(\overline{b_1},\dots, \overline{b_n})= 
      p^B\left(a_1(\overline{b_1},\dots,\overline{b_n}),\dots , a_m(\overline{b_1},\dots,\overline{b_n})\right),
    \end{equation*}
    where $a_i\colon \overline{B}^n\mapsto B$ is the map 
    \[
    (\overline{b_1},\dots,\overline{b_n})\mapsto \phi_i^B(\eta^{\overline{B}}_{2i-1}(\overline{b_1},\dots, \overline{b_n}),\eta^{\overline{B}}_{2i}(\overline{b_1},\dots, \overline{b_n}))
    \]
    for all $1\leq i\leq m$. In addition, if $A$ and $B$ are two isoclinic skew braces with isoclinism $(\xi,\theta)$,  then the following diagram
    \begin{equation}
\begin{tikzcd}
	{\overline{A}^n} & {A'} \\
	{\overline{B}^n} & {B'}
	\arrow["{\phi^A}", from=1-1, to=1-2]
	\arrow["{\phi^B}"', from=2-1, to=2-2]
	\arrow["\xi^n"', from=1-1, to=2-1]
	\arrow["\theta", from=1-2, to=2-2]
\end{tikzcd}
\end{equation}
    commutes.
\end{lem}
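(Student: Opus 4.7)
My plan is to verify the statement in three stages, matching the three layers of the construction: first the terms $\eta_j$ evaluated in the quotient $\overline{B}$, then the maps $\phi_i$ producing elements of the commutator $B'$, and finally the outer term $p$ recombining them inside $B'$.

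To see that $\phi^B$ is well-defined with codomain $B'$, I would observe that $\eta_j^{\overline{B}}$ is a term function on the skew brace $\overline{B}$, so it depends only on the classes $\overline{b_1},\dots,\overline{b_n}$; these values then feed into $\phi_i^B$, which is well-defined on $\overline{B}^2$ by the preceding lemma and has image in $B'$. Because $B'$ is an ideal, it is a sub skew brace and hence closed under $+$, $\circ$, $-$ and $(\cdot)'$, so evaluating $p^B$ on any tuple $(a_1,\dots,a_m)\in (B')^m$ lands again in $B'$. This produces the asserted map $\phi^B\colon\overline{B}^n\to B'$.

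For the naturality square, I would chain three facts. First, since $\xi\colon\overline{A}\to\overline{B}$ is a skew brace isomorphism, a straightforward induction on term complexity gives
\[
\xi\bigl(\eta_j^{\overline{A}}(\overline{a_1},\dots,\overline{a_n})\bigr)=\eta_j^{\overline{B}}\bigl(\xi(\overline{a_1}),\dots,\xi(\overline{a_n})\bigr).
\]
Second, the defining diagram \eqref{eq:isoclinism} yields $\theta\circ\phi_i^A=\phi_i^B\circ(\xi\times\xi)$ for each $i$; combined with the first fact this gives $\theta(a_i^A(\overline{a_1},\dots,\overline{a_n}))=a_i^B(\xi(\overline{a_1}),\dots,\xi(\overline{a_n}))$. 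Third, since $\theta\colon A'\to B'$ is a skew brace isomorphism, the same kind of induction on term complexity shows that $\theta$ commutes with $p$, namely $\theta(p^A(x_1,\dots,x_m))=p^B(\theta(x_1),\dots,\theta(x_m))$ for all $x_1,\dots,x_m\in A'$. Applying the third identity to $\phi^A(\overline{a_1},\dots,\overline{a_n})=p^A(a_1^A,\dots,a_m^A)$ and then replacing each $\theta(a_i^A)$ using the second identity produces exactly $\phi^B(\xi(\overline{a_1}),\dots,\xi(\overline{a_n}))$, which is the desired commutativity.

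The main obstacle, if any, is purely bookkeeping: the two inductive facts above amount to the standard universal-algebra statement that skew brace homomorphisms preserve term functions, and everything else is an immediate combination of the preceding lemma with the commutativity of \eqref{eq:isoclinism}. No deeper skew brace theory appears to be required beyond the fact, already used throughout the section, that $A'$ and $B'$ are themselves sub skew braces on which $\theta$ acts as an isomorphism.
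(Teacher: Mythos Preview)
Your argument is correct and essentially identical to the paper's: both factor $\phi^B$ as $\overline{B}^n\to\overline{B}^{2m}\to (B')^m\to B'$ via the $\eta_j$, the $\phi_i$, and $p$, and both obtain the commutativity by combining the isoclinism square with the fact that the skew brace homomorphisms $\xi$ and $\theta$ preserve term functions. The only cosmetic difference is that the paper cites \cite[Theorem~10.3]{sankappanavar1981course} for this preservation property, whereas you describe it as a straightforward induction on term complexity; your write-up is slightly more explicit about why the image lands in $B'$ and why $\theta$ commutes with the outer term $p$.
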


\begin{proof}
Let $B$ be a skew brace. One has that $\phi^B$ 
 is the composition of the maps
 \[\begin{tikzcd}
	{\overline{B}^n} & {\overline{B}^{2m}} & {B^m} & B
	\arrow["{\theta}", from=1-1, to=1-2]
	\arrow["{\gamma}", from=1-2, to=1-3]
	\arrow["{p^B}", from=1-3, to=1-4]
\end{tikzcd}\]
where  
\begin{align*}
\theta(\overline{b_1},\dots,\overline{b_n}) &= (\eta^{\overline{B}}_1(\overline{b_1},\dots,\overline{b_n}),\dots,\eta^{\overline{B}}_{2m}(\overline{b_1},\dots,\overline{b_n})),\\
\gamma(\overline{f_1},\dots, \overline{f_{2m}})&=(\phi^B_1(\overline{f_1},\overline{f_2}),\dots,\phi^B_m(\overline{f_{2m-1}},\overline{f_{2m}})).
\end{align*}
Suppose that $A$ and $B$ are isoclinic skew braces with isoclinism $(\xi,\theta)$.
Since $(\xi,\theta)$ is an isoclinism, \cite[Theorem 10.3]{sankappanavar1981course} implies that for all $a_1,\dots, a_n \in A$ and $1\leq i\leq m$,
\begin{align*}
    \phi_i^B(\eta^{\overline{B}}_{2i-1}(&\xi(\overline{a_1}),\dots, \xi(\overline{a_n})),\eta^{\overline{B}}_{2i}(\xi(\overline{a_1}),\dots, \xi(\overline{a_n})))\\
    &=\mathcal\phi_i^B(\xi(\eta^{\overline{A}}_{2i-1}(\overline{a_1},\dots, \overline{a_n})),\xi(\eta^{\overline{A}}_{2i}(\overline{a_1},\dots, \overline{a_n})))\\
    &=\theta(\phi_i^A(\eta^{\overline{A}}_{2i-1}(\overline{a_1},\dots, \overline{a_n}),\eta^{\overline{A}}_{2i}(\overline{a_1},\dots, \overline{a_n}))).
\end{align*}
Therefore
\[
\phi^B(\xi^n(\overline{a_1},\dots,\overline{a_n}))
=\theta(\phi^A(\overline{a_1},\dots, \overline{a_n})).
\]
This means that the diagram \eqref{eq:isoclinismBis} is commutative.
\end{proof}

\begin{notation}
    For $X$ a skew brace and $x,y\in X$, let 
    \begin{align*}
        r(x,y)&= -y-x+x\circ y= [-y,-x+x\circ y]_++x*y,\\
        l(x,y)&=x\circ y-y-x= [x\circ y-y,-x]_++x*y.
    \end{align*}
\end{notation}

\begin{pro}
\label{pro: circlecomap}
For all skew braces $X$, the map 
\begin{equation*}
  \phi_{\circ}^X\colon \overline{X}^2\to X',\quad 
  (\overline{a},\overline{b})\mapsto [a,b]_{\circ},\\
\end{equation*}
is well-defined. In addition, if $A$ and $B$ are isoclinic skew braces
with isoclinism $(\xi,\theta)$, then 
    \begin{equation}
        \label{eq:isoclinismBis}
\begin{tikzcd}
	{\overline{A}^2} & {A'} \\
	{\overline{B}^2} & {B'}
	\arrow["{\phi_{\circ}^A}", from=1-1, to=1-2]
	\arrow["{\phi_{\circ}^B}"', from=2-1, to=2-2]
	\arrow["\xi\times\xi"', from=1-1, to=2-1]
	\arrow["\theta", from=1-2, to=2-2]
\end{tikzcd}
\end{equation}
commutes.
\end{pro}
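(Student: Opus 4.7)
My plan is to treat the two claims separately, both by reducing to \cref{lem: natrans}. For the well-definedness of $\phi_\circ^X$, I will argue directly from the defining properties of $\Ann(X)$: if $\alpha,\beta\in\Ann(X)$, then $\lambda_a(\alpha)=\alpha$ forces $a+\alpha = a\circ\alpha$, and similarly $b+\beta = b\circ\beta$; moreover $\alpha,\beta\in Z(X,\circ)$. Substituting in $[a+\alpha,b+\beta]_\circ$ and pushing $\alpha,\beta,\alpha',\beta'$ to the right of the $\circ$-product, they cancel in pairs and leave $[a,b]_\circ$.

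For the commutativity of \eqref{eq:isoclinismBis}, the plan is to exhibit $[a,b]_\circ$ as a universal expression of the shape to which \cref{lem: natrans} applies, namely a skew brace term evaluated on a finite list of $\phi_+^X$- and $\phi_*^X$-values at skew brace terms in $(a,b)$. Iterating the identity $u\circ v = u + v + r(u,v)$ three times gives
\[
[a,b]_\circ = a + b + r(a,b) + a' + r(a\circ b,\, a') + b' + r(a\circ b\circ a',\, b').
\]
Each $r(p,q) = [-q,\,-p+p\circ q]_+ + p*q$ already has the desired syntactic shape. The remaining bare summand $a+b+a'+b'$ lies in $X'$ (it reduces to $0$ in the trivial abelian quotient $X/X'$), and it can be rewritten as a $+$-combination of additive commutators and star products using the identity $u+u' = [u,-u*u']_+ - u*u'$, which follows from $u\circ u' = 0$, together with the commutation rule $u+v = [u,v]_+ + v + u$. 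The latter also absorbs the conjugations needed to collect the $r$-summands into the desired shape. Then \cref{lem: natrans} delivers the commutativity of \eqref{eq:isoclinismBis}.

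The main technical obstacle will be the bookkeeping in the non-abelian additive group $(X,+)$: each commutation of summands introduces a correction $[u,v]_+$, itself an additional $\phi_+^X$-value at skew brace terms in $(a,b)$. These corrections stay inside $X'$ and fit the format required by \cref{lem: natrans}, so the reduction goes through.
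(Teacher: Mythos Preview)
Your strategy---express $[a,b]_\circ$ as a skew brace term evaluated at $\phi_+$- and $\phi_*$-values and then invoke \cref{lem: natrans}---is exactly the paper's, and your separate direct check of well-definedness is correct (though \cref{lem: natrans} already delivers it once the decomposition is in hand). The difference lies in the decomposition. The paper produces a single closed formula
\[
[a,b]_{\circ}=[a,b]_+-l(b,a)-r(b\circ a,\,a'\circ b')+r(b,\,a'\circ b')+r(a,\,b\circ a'\circ b'),
\]
which contains no ``bare'' occurrences of $a,b,a',b'$ and so fits \cref{lem: natrans} immediately. Your route via iterating $u\circ v = u+v+r(u,v)$ leaves the residual block $a+b+\dots+a'+\dots+b'$ and then eliminates it by repeated additive commutation; this is sound but noticeably longer, and you should be aware that every commutation step must itself be recorded as a $\phi_+$-value to stay inside the format of \cref{lem: natrans}.

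There is one concrete error. Your identity $u+u' = [u,-u*u']_+ - u*u'$ is false: from $u\circ u'=0$ one gets $u*u' = -u-u'$, hence $-u*u' = u'+u$, and a direct check shows $[u,\,u'+u]_+ = 0$, so your right-hand side equals $u'+u$, not $u+u'$. The correct (and simpler) identity is
\[
u+u' \;=\; [u,u']_+ + (u'+u) \;=\; [u,u']_+ - u*u',
\]
which is of the shape required by \cref{lem: natrans}. With this fix your argument goes through.
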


\begin{proof}
    Let $X$ be a skew brace. 
    For all $a,b\in X$,
     \[
    [a,b]_{\circ}=[a,b]_+-l(b,a)-r(b\circ a,a'\circ b')+r(b,a'\circ b')+r(a,b\circ a'\circ b').
    \]
    Lemma \ref{lem: natrans} concludes the proof.
\end{proof}

\begin{pro}
    Let $A$ and $B$ be two skew braces. If $A$ and $B$ are isoclinic, 
    then $A_{+}$ is isoclinic to $B_{+}$ and $A_{\circ}$ is isoclinic to $B_{\circ}$.
\end{pro}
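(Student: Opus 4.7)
The plan is to derive, from the given skew brace isoclinism $(\xi,\theta)$, two classical group isoclinisms: one between $A_+$ and $B_+$, and one between $A_\circ$ and $B_\circ$. The key inputs are the inclusions $\Ann(A)\subseteq Z(A,+)\cap Z(A,\circ)$, which yield canonical surjections $A/\Ann(A)\twoheadrightarrow A_+/Z(A_+)$ and $A/\Ann(A)\twoheadrightarrow A_\circ/Z(A_\circ)$ (and similarly for $B$), together with the commutative squares \eqref{eq:isoclinism} for $\phi_+$ and \eqref{eq:isoclinismBis} for $\phi_\circ$.

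First I would check that $\xi$ sends the image of $Z(A,+)$ in $A/\Ann(A)$ bijectively onto the image of $Z(B,+)$ in $B/\Ann(B)$, and analogously for the $\circ$-centers. Indeed, $a\in Z(A,+)$ if and only if $\phi_+^A(\overline{a},\overline{b})=[a,b]_+=0$ for every $\overline{b}\in A/\Ann(A)$; the left square of \eqref{eq:isoclinism} combined with the bijectivity of $\theta$ and $\xi$ translates this into $\xi(\overline{a})\in Z(B,+)/\Ann(B)$, and the $\circ$-version follows from \eqref{eq:isoclinismBis} in the same way. Thus $\xi$ descends to group isomorphisms $\overline{\xi}_+\colon A_+/Z(A_+)\to B_+/Z(B_+)$ and $\overline{\xi}_\circ\colon A_\circ/Z(A_\circ)\to B_\circ/Z(B_\circ)$.

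Next I would restrict $\theta$ to obtain group isomorphisms of the commutator subgroups. Every additive commutator $[a,b]_+=\phi_+^A(\overline{a},\overline{b})$ lies in $A'$, and every multiplicative commutator $[a,b]_\circ=\phi_\circ^A(\overline{a},\overline{b})$ also lies in $A'$ by Proposition~\ref{pro: circlecomap}, so $[A_+,A_+]$ and $[A_\circ,A_\circ]$ both sit inside $A'$. Since $\theta$ is a skew brace isomorphism and hence a homomorphism for both $+$ and $\circ$, and since \eqref{eq:isoclinism} and \eqref{eq:isoclinismBis} send each commutator in $A$ to the corresponding commutator in $B$, $\theta$ carries $[A_+,A_+]$ onto $[B_+,B_+]$ and $[A_\circ,A_\circ]$ onto $[B_\circ,B_\circ]$. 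The two required group-isoclinism squares then commute essentially by construction: for $a,a_1\in A$ with $\overline{\xi}_+(aZ(A,+))=bZ(B,+)$ and $\overline{\xi}_+(a_1Z(A,+))=b_1Z(B,+)$, the identity $\theta([a,a_1]_+)=[b,b_1]_+$ is exactly the content of the left square of \eqref{eq:isoclinism}, and the $\circ$-analogue is the content of \eqref{eq:isoclinismBis}. The step I expect to require the most care is the multiplicative case: one must verify both that $[A_\circ,A_\circ]$ is a $\circ$-subgroup of $A'$ (using that $A'$ is a sub skew brace, so $(A',\circ)$ is itself a subgroup of $A_\circ$ containing all $[a,b]_\circ$) and that $\theta$ respects $\circ$ on this subgroup. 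The additive case is more transparent because $A'$ is defined as an additive subgroup and $\theta$ is automatically additive on it.
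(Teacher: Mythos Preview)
Your proposal is correct and follows essentially the same approach as the paper: use the commutativity of \eqref{eq:isoclinism} to see that $\xi$ carries $Z(A_+)/\Ann(A)$ onto $Z(B_+)/\Ann(B)$ and that $\theta$ restricts to an isomorphism $[A,A]_+\to[B,B]_+$, then invoke Proposition~\ref{pro: circlecomap} and repeat the argument verbatim for the multiplicative structure. The paper's proof is terser but the ingredients and logical flow are identical to yours.
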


\begin{proof}
   Assume $A$ is isoclinic to $B$ with isoclinism $(\xi,\theta)$. Because of the commutativity of \eqref{eq:isoclinism}, the map $\theta$ restricts to an isomorphism $\theta_1: [A,A]_+\rightarrow [B,B]_+$ and $\xi(Z(A_+)/\Ann A)= Z(B_+)/\Ann B$. Therefore $\xi$ induces a group isomorphism $\xi_1: A_+/Z(A_+)\to B_+/Z(B_+)$ such that
    \[\begin{tikzcd}
    {(A_+/ Z(A_+))^2} & {A_+'} \\
	{(B_+/Z(B_+))^2} & {B_+'}
	\arrow[from=1-1, to=1-2]
	\arrow[from=2-1, to=2-2]
	\arrow["\xi\times\xi"', from=1-1, to=2-1]
	\arrow["\theta_1", from=1-2, to=2-2]
\end{tikzcd}
    \]
    commutes, where the horizontal maps are the classical commutator maps for groups. 
    Proposition \ref{pro: circlecomap} and
    a similar argument shows that $A_{\circ}$ is isoclinic to $B_{\circ}$.
\end{proof}

\begin{rem}
    If $A$ and $B$ are isoclinic skew braces, then 
    $A$ is of nilpotent type if and only if $B$ is of nilpotent type. 
\end{rem}

Recall that a skew brace $A$ is said to be \emph{two-sided}
if $(a+b)\circ c=a\circ c-c+b\circ c$ holds for all $a,b,c\in A$. 
In \cite{MR2278047}, Rump proved that radical rings are 
exactly 
two-sided skew braces with an abelian additive
group.

\begin{thm}
\label{thm:2sided}
    Let $A$ and $B$ be skew braces. If $A$ and $B$ are isoclinic, 
    then $A$ is two-sided if and only if $B$ is two-sided.
\end{thm}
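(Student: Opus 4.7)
The plan is to mimic the scheme used for $\phi_\circ$ in Proposition~\ref{pro: circlecomap}: produce a map $\overline{X}^3\to X'$ whose vanishing is equivalent to two-sidedness, check that it fits the framework of Lemma~\ref{lem: natrans}, and conclude by the naturality with respect to isoclinism.

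The natural candidate is the \emph{two-sided defect}
\[
T^X(a,b,c)=(a+b)\circ c-a\circ c+c-b\circ c,
\]
so that by definition $X$ is two-sided if and only if $T^X\equiv 0$. I would first record two preliminary facts. First, $T^X(a,b,c)\in X'$: modulo $X'$ one has $x\circ y\equiv x+y$ and the quotient $X/X'$ is abelian, so $T^X$ telescopes to $0$. Second, $T^X(a,b,c)$ depends only on $\overline{a},\overline{b},\overline{c}$: any $\alpha\in\Ann(X)$ is central in both $+$ and $\circ$ and satisfies $\lambda_x(\alpha)=\alpha$ for every $x$, so $(a+\alpha)\circ c=\alpha+a\circ c$ and $x\circ(c+\alpha)=x\circ c+\alpha$, and the central $\alpha$ can then be commuted out of every summand of $T^X$.

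The main technical step is to rewrite $T^X$ in the shape produced by Lemma~\ref{lem: natrans}. Substituting $x\circ c=x+x*c+c$ gives
\[
T^X(a,b,c)=a+b+(a+b)*c-a*c-a-b*c-b,
\]
and then the identities $x+y=[x,y]_+ + y+x$ and $y+z-y=[y,z]_+ + z$, applied so as to extract each isolated $a$ and $b$ by additive conjugation, yield
\[
T^X = [a,b]_+ + [b+a,(a+b)*c]_+ + (a+b)*c - a*c - [b+a,a*c]_+ - b*c - [b,b*c]_+.
\]
Each summand is a $\phi_+^X$- or $\phi_*^X$-value whose two arguments are skew brace terms in $\overline{a},\overline{b},\overline{c}$, and the outer combination is a purely additive skew brace term in these values. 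This is exactly the form to which Lemma~\ref{lem: natrans} applies.

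Consequently, for any isoclinism $(\xi,\theta)$ from $A$ to $B$ the associated square commutes, and in particular $\theta\circ T^A=T^B\circ\xi^3$. Since $\xi$ and $\theta$ are bijections, $T^A\equiv 0$ if and only if $T^B\equiv 0$, i.e., $A$ is two-sided if and only if $B$ is two-sided. The only delicate point is the algebraic manipulation leading to the displayed identity: because $X_+$ need not be abelian, the conjugation extraction must be carried out carefully, but from there the argument is a direct parallel of the proof of Proposition~\ref{pro: circlecomap}.
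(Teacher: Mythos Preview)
Your proposal is correct and follows essentially the same strategy as the paper: define a ``two-sided defect'' map $\overline{X}^3\to X'$, express it as an additive combination of values of $\phi_+$ and $\phi_*$ at skew brace terms, invoke Lemma~\ref{lem: natrans}, and conclude. The paper's defect is $(a+b)\circ c-b\circ c+c-a\circ c$ (yours swaps the roles of $a$ and $b$, which is immaterial since the condition is quantified over all $a,b$), and its decomposition is phrased via the auxiliary function $l(x,y)=x\circ y-y-x$ rather than directly in terms of $[\,\cdot\,,\,\cdot\,]_+$ and $*$, but this is only a cosmetic difference. Your preliminary checks that $T^X\in X'$ and that $T^X$ descends to $\overline{X}^3$ are in fact automatic consequences of Lemma~\ref{lem: natrans} once the decomposition is established, so they are redundant but harmless.
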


\begin{proof}
    We claim that for all skew braces $X$
    \begin{equation*}
    \phi^X\colon (\overline{X})^3\to X',(\overline{a},\overline{b},\overline{c})\mapsto (a+b)\circ c-b\circ c+c -a\circ c,\\
    \end{equation*}
is well-defined. In addition, if $A$ and $B$ be isoclinic skew braces with isoclinism $(\xi,\theta)$. the following diagram
    \begin{equation}
    \label{eq:isoclinismTS}
    \begin{tikzcd}
	{\overline{A}^3} & {A'} \\
	{\overline{B}^3} & {B'}
	\arrow["{\phi^A}", from=1-1, to=1-2]
	\arrow["{\phi^B}"', from=2-1, to=2-2]
	\arrow["{\xi^3}"', from=1-1, to=2-1]
	\arrow["\theta", from=1-2, to=2-2]
\end{tikzcd}
\end{equation}
    commutes.
    This is a direct consequence of the Lemma \ref{lem: natrans} and the fact that for any elements $a,b,c$ of any brace, the following equation holds
    \[(a+b)\circ c-b\circ c+c -a\circ c= l(a+b,c)-l(b,c)-[a,b\circ c-c-b]_+-l(a,c)
    \]
    This concludes the proof.
\end{proof}
\begin{exa}
    Computer calculations using the database of 
    \cite{MR3647970} show that
    among the 
    47 skew braces of size eight, 
    there are 
    20 isoclinism classes. Moreover,
    there are eight isoclinism 
    classes of radical rings of size eight, 
    12 isoclinism classes of skew braces
    of abelian type of size eight and
    16 isoclinism classes of two-sided
    skew braces of size eight. 
    There are 101 skew braces of size 27 and  
    there are 
    38 isoclinism classes.
    See 
    Table \ref{tab:isoclinism} for other
    numbers. 
    \begin{table}[ht]
        \centering
        \begin{tabular}{c|cccc}
            size & radical rings & abelian type & two-sided & all \\
            \hline
            8 & 8 & 12 & 16 & 20\tabularnewline 
            27 & 10 & 13 & 25 & 38\tabularnewline
        \end{tabular}
        \caption{Number of isoclinism classes of skew braces.}
        \label{tab:isoclinism}
    \end{table}
    
\end{exa}

\begin{notation}
\label{not:conjugation}
Let $B$ be a skew brace. 
There is a canonical group homomorphism 
\[
B_+\rtimes_\rho B_{\circ}\to \Aut(B_+),
\quad
(a,b)\mapsto (c\mapsto a+\rho_b(c)-a),
\]
We write $\prescript{(a,b)}{}{}c=a+\rho_b(c)-a$.
\end{notation}

\begin{notation}
Let $A$ be a skew brace, denote respectively by $\overline{A_+}$ and $\overline{A_\circ}$ the groups $A_+/\Ann(A)$ and $A_\circ/\Ann(A)$.
\end{notation}

\begin{rem}
The group homomorphism $\rho\colon A_\circ\to\Aut(A_+)$ induces a
group homomorphism $\overline{\rho}\colon\overline{A_\circ} \to\Aut(\overline{A_+})$. One can check that the group $\overline{A_+}\rtimes_{\overline{\rho}}\overline{A_\circ}$ is isomorphic to $(A_+\rtimes_{\rho}A_\circ)/(\Ann(A)\times\Ann(A))$. Thus $\overline{A_+}\rtimes_{\overline{\rho}}\overline{A_\circ}$ acts canonically on $A_+$.
\end{rem}

\begin{rem}
If $A$ and $B$ are isoclinic skew braces, then $\overline{A_+}\rtimes_{\overline{\rho}}\overline{A_\circ}\simeq \overline{B_+}\rtimes_{\overline{\rho}}\overline{B_\circ}$.
\end{rem}

\begin{notation}
Let $A$ be a skew brace. Let $H$ be a subgroup
of $\overline{A_+}\rtimes_{\overline{\rho}} \overline{A_\circ}$. We call the orbit of an element $a\in A$ under the induced action of $H$ an $H$-orbit.
\end{notation}

\begin{thm}
\label{thm:Horbits}
Let $A$ and $B$ be isoclinic skew braces. Let $H$ 
be a subgroup of $\overline{A_+}\rtimes_{\overline{\rho}}\overline{A_\circ}$ 
and $K$ be the corresponding subgroup of  $\overline{B_+}\rtimes_{\overline{\rho}}\overline{B_\circ}$. For 
$c\in\Z_{\geq1}$, let 
$m_1$ (resp. $m_2$) be the number of $H$-orbits (resp. $K$-orbits) 
of size $c$. 
Then 
\[
m_1=m_2|A|/|B|.  
\]
\end{thm}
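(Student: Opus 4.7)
The plan is to count $H$-orbits on $A$ by fibering through the canonical projection $\pi\colon A\to\overline{A}$ and comparing with the analogous decomposition on $B$. Since $\Ann(A)\subseteq \ker\lambda\cap Z(A,+)\cap Z(A,\circ)$, the map $\rho_\beta$ fixes $\Ann(A)$ pointwise. It follows that the action of $\overline{A_+}\rtimes_{\overline\rho}\overline{A_\circ}$ on $A$ is well-defined and that $\pi$ is $H$-equivariant, so every $H$-orbit on $A$ lies over a unique $H$-orbit on $\overline{A}$. Fix an orbit $\overline O$ on $\overline A$, a representative $\overline a$, and a lift $a\in A$. Exploiting additivity of $\rho_\beta$ and the centrality of $\Ann(A)$, one checks that for $h=(\overline x,\overline\beta)\in\operatorname{Stab}_H(\overline a)$ the displacement
\[
z_{\overline a}(h):=h\cdot a-a=x+\rho_\beta(a)-x-a
\]
lies in $\Ann(A)$ and satisfies $h\cdot(a+w)=(a+z_{\overline a}(h))+w$ for every $w\in\Ann(A)$. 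Thus $\operatorname{Stab}_H(\overline a)$ acts on the fiber $a+\Ann(A)$ by translation through the subgroup $Z_H(\overline a):=z_{\overline a}(\operatorname{Stab}_H(\overline a))\le\Ann(A)$, and orbit--stabilizer counting shows that $\pi^{-1}(\overline O)$ splits into exactly $|\Ann(A)|/|Z_H(\overline a)|$ $H$-orbits on $A$, each of size $|\overline O|\cdot|Z_H(\overline a)|$.

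The crucial step is to prove that $|Z_H(\overline a)|=|Z_K(\overline{a}')|$ whenever $\overline O$ and $\overline O'$ correspond under the isoclinism. Since $A/A'$ is abelian and $\rho_\beta(a)-a\in A'$ (from $\rho_\beta(a)=\beta+\beta*a+a-\beta$), the displacement $z_{\overline a}(h)$ belongs to $A'\cap\Ann(A)$, so $\theta$ is defined on it. I then rewrite
\[
z_{\overline a}(h)=[x,\rho_\beta(a)]_+ +\bigl(\rho_\beta(a)-a\bigr)=[x,\rho_\beta(a)]_+ +\beta*a+[-\beta*a,\beta]_+ +[\beta,a]_+,
\]
a sum whose summands are $\phi_+^A$- or $\phi_*^A$-values applied to skew brace term functions in $(\overline a,\overline x,\overline\beta)$. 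This fits the recipe of Lemma \ref{lem: natrans}, which then delivers $\theta(z_{\overline a}(h))=z_{\overline{a}'}(h')$ whenever $h\leftrightarrow h'$ under the isoclinism. Consequently $\theta$ restricts to a group isomorphism $Z_H(\overline a)\simeq Z_K(\overline{a}')$.

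The conclusion is then a bookkeeping exercise. Under the bijection $\overline O\leftrightarrow\overline O'$ induced by the isoclinism, $|\overline O|=|\overline O'|$ and $|Z_H(\overline a)|=|Z_K(\overline{a}')|$, so $H$-orbits of size $c$ above $\overline O$ correspond to $K$-orbits of size $c$ above $\overline O'$, with respective counts $|\Ann(A)|/|Z_H(\overline a)|$ and $|\Ann(B)|/|Z_K(\overline{a}')|$. Summing and using $|A/\Ann(A)|=|B/\Ann(B)|$ gives $|\Ann(A)|/|\Ann(B)|=|A|/|B|$, whence $m_1=(|A|/|B|)\,m_2$. The main obstacle is the rewriting in the second paragraph: producing the natural form of $z_{\overline a}(h)$ requires care in the non-abelian additive group, but once that is pinned down, everything else reduces to standard orbit--stabilizer arithmetic.
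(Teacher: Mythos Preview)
Your argument is correct, and it rests on the same core ingredient as the paper's proof: the identity expressing $\prescript{(x,\beta)}{}a-a$ as a sum of values of $\phi_+$ and $\phi_*$ applied to term functions, so that Lemma~\ref{lem: natrans} makes the displacement isoclinism-compatible. Your decomposition $[x,\rho_\beta(a)]_+ +\beta*a+[-\beta*a,\beta]_+ +[\beta,a]_+$ is a valid variant of the paper's three-term identity
\[
\prescript{(x,\beta)}{}a-a=[x,\beta\circ a-\beta]_+ + [\beta\circ a,-\beta]_+ + \beta*a.
\]

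Where you diverge is in the bookkeeping. You fiber over $H$-orbits in $\overline{A}$, introduce the displacement homomorphism $z_{\overline a}\colon \operatorname{Stab}_H(\overline a)\to\Ann(A)$ and its image $Z_H(\overline a)$, and then compare $Z_H(\overline a)$ with $Z_K(\overline{a}')$ via $\theta$. The paper short-circuits all of this: since the displacement $\phi^A(h,\overline a)=h\cdot a-a$ is well-defined on $\overline A$, the full stabilizer $\operatorname{Stab}_H(a)=\{h\in H:\phi^A(h,\overline a)=0\}$ depends only on $\overline a$, so the orbit size is constant along each fiber of $\pi$. One then simply lets $S\subseteq\overline A$ be the set of classes giving orbit size $c$, reads off $m_1c=|\Ann(A)|\,|S|$ and $m_2c=|\Ann(B)|\,|S|$ (same $|S|$ by the commuting square), and concludes. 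In your language this amounts to the observation that $Z_H(\overline a)$ is always trivial once one remembers that $\phi^A$ is defined on all of $\overline A^3$, not just on the stabilizer; equivalently, your $|\overline O|\cdot|Z_H(\overline a)|$ collapses to $[H:C(\overline a)]$. Your route is sound, but the paper's avoids the fiber-translation machinery entirely.
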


\begin{proof}
    Lemma \ref{lem: natrans} and the fact that 
    \[
    \prescript{(a,b)}{}{}c-c=[a,b\circ c-b]_++[b\circ c,-b]_++b*c
    \]imply that the map 
    \[
    \phi^X\colon \overline{X}^3\to X',\quad (\overline{a},\overline{b},\overline{c})\mapsto \prescript{(a,b)}{}{}c-c
    \]
    is well-defined for all skew brace $X$. 
    In addition, the diagram
    \begin{equation}
    \label{eq:isoclinismConj}
    \begin{tikzcd}
	{\overline{A}^3} & {A'} \\
	{\overline{B}^3} & {B'}
	\arrow["{\phi^A}", from=1-1, to=1-2]
	\arrow["{\phi^B}"', from=2-1, to=2-2]
	\arrow["{\xi^3}"', from=1-1, to=2-1]
	\arrow["\theta", from=1-2, to=2-2]
\end{tikzcd}
\end{equation}
commutes.

    An element $a\in A$ has an $H$-orbit of size $c$ if and only if the index of the subgroup $C(\overline{a})=\lbrace w\in H :\phi^A(w,\overline{a})=0\rbrace$ in $H$ is $c$. Let $S$ be 
    the subset of $\overline{A}$ that consists of elements $w$ such that $C(w)$ has index $c$ in $H$. If $\pi\colon A\to \overline{A}$
    is the canonical homomorphism, $\pi^{-1}(S)$ is the set of elements of $A$ that have an $H$-orbit of size $c$. Hence $m_1c=|\Ann A||S|$. Because of the commutativity of \eqref{eq:isoclinismConj}, one also has that $m_2c=|\Ann B||S|$. Hence the claim follows.
\end{proof}

\begin{rem}
    We use the notations of Theorem \ref{thm:Horbits}. 
    \begin{enumerate}
    \item Let 
    \begin{align*}
    H&=\{(-\overline{a},\overline{a}): a\in A\}
    \subseteq \overline{A_+}\rtimes_{\overline{\rho}}\overline{A_\circ},\\
    K&=\{(-\overline{b},\overline{b}): b\in B\}
    \subseteq \overline{B_+}\rtimes_{\overline{\rho}}\overline{B_\circ}.
    \end{align*}
    Note that $K$ is the subgroup of  $\overline{B_+}\rtimes_{\overline{\rho}}\overline{B_\circ}$
    corresponding to $H$ 
    by the isomorphism induced by isoclinism. 
    Then the $H$-orbits and the 
    $K$-orbits are, respectively, the orbits of the canonical actions $\lambda\colon A_\circ\to\Aut(A_+)$ and $\lambda\colon 
    B_\circ\to\Aut(B_+)$. 
    
    \item 
    Similarly, Theorem \ref{thm:Horbits} applies to the
    pair 
    $H=\{(\overline{0},\overline{a}): a\in A\}$ and 
    $K=\{(\overline{0},\overline{b}): b\in B\}$. In this case, 
    the $H$-orbits and the 
    $K$-orbits are, respectively, the orbits of the canonical actions $\rho\colon A_\circ\to\Aut(A_+)$ and $\rho\colon 
    B_\circ\to\Aut(B_+)$. 
    \end{enumerate}
\end{rem}

\begin{exa} 
Let $A$ be the skew brace $C_2\times C_4$ with multiplication given by
\[(x_1,y_1)\circ (x_2,y_2)=(x_1+x_2,y_1+y_2+2x_1y_2)
\]
and $B$ the skew brace $C_2\times C_4$ with multiplication given by
\[(x_1,y_1)\circ (x_2,y_2)=(x_1+x_2,y_1+y_2+2(x_1+y_2)x_2+2y_1y_2).
\]
In the skew brace $A$, 
\begin{gather*}
(x_1,y_1)* (x_2,y_2)=(0,2x_1y_2)   
\shortintertext{and in $B$}
(x_1,y_1)*(x_2,y_2)=(0,2(x_1+y_2)x_2+2y_1y_2).
\end{gather*}
Both $A$ and $B$ have commutator $C_2$ and annihilator quotient $C_2\times C_2$. In addition, $A_+ \simeq B_+$ and $A_\circ\simeq D_8\simeq B_\circ$ where $D_8$ is the dihedral group of order 8. However, $A$ and $B$ are not isoclinic as $A$ has four $\lambda$-orbits of size one and two of size two and $B$ has two $\lambda$-orbits of size one and three of size two. 
\end{exa}

In group theory, the notion of isoclinism is very convenient in the study of finite $p$-groups as these groups have non-trivial center and their commutator is a proper subgroup. This implies that isoclinism only depends on relations between groups of smaller order. However, there exist skew braces of prime-power size that have trivial annihilator.
This is not the case for two-sided skew braces. An interesting property of two-sided skew braces is that the multiplicative conjugation is an action by automorphism of the multiplicative group over the additive one. Using this,  one can extend the action defined earlier (see Notation \ref{not:conjugation}) for two-sided skew braces. Let $B$ be a two-sided skew brace. Since 
the underlying multiplicative group acts on itself by conjugation, 
we can consider the semidirect product $B_\circ \rtimes B_\circ$. 
The map 
\[
B_\circ\rtimes B_\circ \to \Aut(B_+),
\quad
(a,b)\mapsto (c\mapsto \rho_a(b\circ c\circ b')),
\]
defines an action by automorphisms of $B_\circ\rtimes B_\circ$ over $B_+$. The latter comes from the fact that $\rho_{b\circ a\circ b'}(b\circ c\circ b')=b\circ \rho_a(c)\circ b'$ for all $a,b,c\in B$. 
Thus 
we can consider the semi-direct product
$B_+\rtimes(B_\circ\rtimes B_\circ)$. Finally, straightforward computations show that the map
\[
B_+\rtimes (B_\circ\rtimes B_\circ) \to \Aut(B_+),
\quad
(a,b,c)\mapsto 
(d\mapsto a+\rho_b(c\circ d\circ c')-a),
\]
defines an action by automorphisms of $B_+\rtimes(B_\circ\rtimes B_\circ)$ over $B_+$. It is straightforward to see that the elements of $B$ whose orbits have size one are exactly the elements of the annihilator. Moreover, if $B$ is a two-sided skew brace of size $p^n$, the group $B_+\rtimes (B_\circ\rtimes B_\circ)$ has size $p^{3n}$. Thus the non-trivial orbits of the action have size a power of $p$. Let $n_1,\dots, n_m$ denote the sizes of the $m$ non-trivial orbits of $B$, then we have the following class equation
\[p^n= |\Ann(B)|+\sum_{i=1}^mn_i.
\]
Therefore $p$ divides $|\Ann(B)|$.
We have proved the following result:

\begin{pro}
    Let $p$ be a prime number and 
    $B$ be a two-sided skew brace of size $p^n$ 
    for some integer $n\geq1$. 
    Then $\Ann(B)$ is non-trivial. 
\end{pro}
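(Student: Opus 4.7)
The plan is to apply the standard class-equation argument, the one used to prove that a finite $p$-group has non-trivial centre, to the action on $B_+$ that the preceding paragraph has just constructed. The approach splits cleanly into two steps, both of which the discussion above has essentially laid out: build a $p$-group acting on $B_+$, then identify the fixed-point set with $\Ann(B)$ and count.

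Concretely, I would take $G = B_+\rtimes(B_\circ\rtimes B_\circ)$ acting on the set $B_+$ by $(a,b,c)\cdot d = a+\rho_b(c\circ d\circ c')-a$. The fact that this is an honest action by automorphisms was verified in the paragraph before the statement and is exactly where two-sidedness is used, via the identity $\rho_{b\circ a\circ b'}(b\circ c\circ b') = b\circ\rho_a(c)\circ b'$. From $|B|=p^n$ we get $|G| = |B|^3 = p^{3n}$, so $G$ is a $p$-group acting on the finite set $B_+$ of size $p^n$.

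Next I would verify, as is already claimed in the discussion, that the fixed points of the action are precisely the elements of $\Ann(B)$. Taking the triple $(a,b,c)$ to be $(0,0,c)$ forces $c\circ d\circ c' = d$ for all $c$, hence $d\in Z(B_\circ)$; taking $(a,0,0)$ gives $d\in Z(B_+)$; and taking $(0,b,0)$ gives $\rho_b(d) = d$ for all $b$. Combined with centrality in $B_+$ and the two-sided structure, this places $d$ in $\Soc(B)\cap Z(B_\circ) = \Ann(B)$; the reverse inclusion is immediate from the definition of $\Ann(B)$. Orbit–stabiliser now says every orbit has $p$-power size, and the singleton orbits are exactly $\Ann(B)$, giving the class equation
\[
p^n = |\Ann(B)| + \sum_{i=1}^m n_i,
\]
with each $n_i$ a positive power of $p$. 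Reducing modulo $p$ yields $p \mid |\Ann(B)|$, so $\Ann(B)\neq\{0\}$.

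The only non-routine point, and the one where two-sidedness is genuinely indispensable, is the identification of the fixed-point set with $\Ann(B)$: without the extra conjugation factor $c\circ d\circ c'$ in the action (which in turn requires $B_\circ$ to act by automorphisms on $B_+$ in a way compatible with multiplicative conjugation), one cannot simultaneously force $d$ into $Z(B_\circ)$ and into $\Soc(B)$. Once this identification is established the remainder is a transcription of the classical $p$-group class-equation argument.
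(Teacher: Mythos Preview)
Your proposal is correct and follows exactly the same approach as the paper: the proof in the paper \emph{is} the paragraph immediately preceding the statement (the proposition is introduced by ``We have proved the following result:''), and you have accurately reconstructed that argument, namely letting $G=B_+\rtimes(B_\circ\rtimes B_\circ)$ act on $B_+$, identifying the fixed-point set with $\Ann(B)$, and reading off $p\mid|\Ann(B)|$ from the class equation. Your additional verification of the fixed-point identification (which the paper declares ``straightforward'') is sound; in particular your step from $\rho_b(d)=d$ and $d\in Z(B_+)$ to $d\in\ker\lambda$ follows since $\rho_b(d)=b+\lambda_b(d)-b$.
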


\begin{pro}
    Let $p$ be a prime number and 
    $B$ be a two-sided skew brace of size $p^n$ 
    for some integer $n\geq1$. 
    Then $B'$ is a proper ideal of $B$.
\end{pro}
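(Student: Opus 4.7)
The plan is to argue by induction on $n$, with the preceding proposition supplying the key input that $\Ann(B)$ is non-trivial. A preliminary observation: two-sidedness is expressed by the universal identity $(a+b)\circ c=a\circ c-c+b\circ c$, so it is automatically inherited by any skew brace quotient; in particular $\overline{B}=B/\Ann(B)$ is a two-sided skew brace whose order is a strictly smaller power of $p$.

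For the base case $n=1$, the preceding proposition forces $\Ann(B)=B$, so $\lambda$ is trivial and $B_+$ is abelian. This makes $B$ a trivial skew brace, whence $[B,B]_+=0$ and $a*b=-a+(a+b)-b=0$ for all $a,b\in B$, so $B'=\{0\}$ is proper. For $n>1$, if $\Ann(B)=B$ the same computation applies; otherwise $\overline{B}$ is a two-sided skew brace of order $p^m$ with $1\leq m<n$, and the inductive hypothesis gives $\overline{B}'\subsetneq\overline{B}$. Writing $\pi\colon B\to\overline{B}$ for the canonical projection, $\pi$ sends the additive commutators $[a,b]_+$ and the star products $a*b$ generating $B'$ to the corresponding generators of $\overline{B}'$, so $\pi(B')=\overline{B}'$, i.e., $(B'+\Ann(B))/\Ann(B)=\overline{B}'$. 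The induction hypothesis then yields $B'+\Ann(B)\neq B$, and a fortiori $B'\neq B$.

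The main technical point is the identification $\pi(B')=\overline{B}'$, which reduces to the fact that the definitional generators of the commutator ideal—namely additive commutators and star products—are preserved by skew brace homomorphisms, together with the observation that these images already generate $\overline{B}'$. Once this is in hand, the induction closes immediately, mirroring the classical proof that the derived subgroup of a finite $p$-group is proper via the non-triviality of the center.
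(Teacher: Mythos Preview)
Your proof is correct and follows essentially the same inductive strategy as the paper: use the preceding proposition to get $|\overline{B}|<p^n$, identify $\overline{B}'$ with $(B'+\Ann(B))/\Ann(B)$, and apply the inductive hypothesis. You are in fact slightly more careful than the paper in two places: you explicitly note that two-sidedness passes to quotients, and you separately dispose of the possibility $\Ann(B)=B$ in the inductive step (where the inductive hypothesis, stated only for $n\geq 1$, would not literally apply to the trivial quotient).
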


\begin{proof}
    We proceed by induction on $n$. If $n=1$, then $B$ is the trivial skew brace $C_p$. Assume now that $n\geq 2$. Since $|B/\Ann(B)|<p^n$, it follows by the induction hypothesis that $\overline{B}'=(B'+\Ann(B))/\Ann(B)$ is a proper sub skew brace of $\overline{B}$. Thus $B'+\Ann(B)\subsetneq B$.
\end{proof}

\section{An application to the Yang--Baxter equation}
\label{section:YB}

A \emph{set-theoretic solution} to the 
Yang--Baxter equation (YBE) 
is a pair $(X,r)$, where $X$ is a set and 
$r\colon
X\times X\to X\times X$ is a bijective map 
such that
\[
    (r\times\id)(\id\times r)(r\times\id)=(\id\times r)(r\times\id)(\id\times r).
\]

By convention, we will consider 
finite \emph{non-degenerate} solutions, 
that is solutions $(X,r)$,
where $X$ is a finite set and 
\[
r(x,y)=(\sigma_x(y),\tau_y(x))
\]
where the maps $\sigma_x\colon X\to X$ and $\tau_x\colon X\to X$
are bijective for every $x\in X$. 

If $(X,r)$ is a solution, 
there is an equivalence relation
on $X$ given~by
\[
x\sim y\Longleftrightarrow \sigma_x=\sigma_y\text{ and }\tau_x=\tau_y.
\]
This equivalence relation induces a solution 
$\Ret(X,r)$ on the set of equivalence classes. 
The solution $\Ret(X,r)$ 
is called the \emph{retraction}
of $(X,r)$. 

A solution $(X,r)$ is said to be 
\emph{multipermutation} if
there exists an integer $m\geq1$ 
such that $|\Ret^m(X,r)|=1$, 
where $\Ret^{1}(X,r)=\Ret(X,r)$
and 
\[
\Ret^{k}(X,r)=\Ret(\Ret^{k-1}(X,r))
\]
for $k\geq2$.

The \emph{permutation group} of $(X,r)$ if the group
$\mathcal{G}(X,r)=\langle \sigma_x,\tau_x:x\in X\rangle$. 
The permutation group of $(X,r)$ is a skew brace \cite{MR3835326}. 

\begin{defn}
    Let $(X,r)$ and $(Y,s)$ be solutions to the YBE. We say that 
    $(X,r)$ and $(Y,s)$ are \emph{permutation isoclinic} if 
    the skew braces $\mathcal{G}(X,r)$ and 
    $\mathcal{G}(Y,s)$ are isoclinic. 
\end{defn}

The following result follows from 
\cite[Proposition 2.17]{MR3957824}, 
\cite[Theorem 2.20]{MR3957824} and
\cite[Theorem 4.13]{MR4457900}. 

\begin{lem}
    Let $(X,r)$ be a finite non-degenerate solution to the YBE. 
    The following statements are equivalent:
    \begin{enumerate}
        \item $(X,r)$ is multipermutation.
        \item $G(X,r)$ is right nilpotent of nilpotent type.
        \item $\mathcal{G}(X,r)$ is right nilpotent of nilpotent type.
    \end{enumerate}
\end{lem}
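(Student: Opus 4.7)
The plan is to break the three-way equivalence into two independent pieces that follow from the cited results, and to exploit the fact that $\mathcal{G}(X,r)$ is a canonical quotient skew brace of $G(X,r)$. Concretely, for a finite non-degenerate solution one has a surjective skew brace homomorphism $G(X,r)\twoheadrightarrow \mathcal{G}(X,r)$ whose kernel is (a subgroup contained in) the socle of $G(X,r)$, so $\mathcal{G}(X,r)\cong G(X,r)/N$ for an ideal $N\subseteq \Soc(G(X,r))$. This is the geometric fact that links (2) and (3).

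First I would dispatch (1) $\Leftrightarrow$ (3). This is precisely the content of \cite[Theorem 4.13]{MR4457900}, which characterizes finite multipermutation solutions as exactly those whose permutation skew brace $\mathcal{G}(X,r)$ is right nilpotent and of nilpotent type. No further argument is needed.

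Next I would prove (2) $\Leftrightarrow$ (3). The easy direction (2) $\Rightarrow$ (3) is immediate: right nilpotency passes to quotient skew braces (since $(A/I)^{(k)} = (A^{(k)}+I)/I$), and the nilpotent type property passes to quotients of the additive group. For the converse (3) $\Rightarrow$ (2), the idea is to lift both conditions through the socle quotient. Right nilpotency lifts by \cite[Proposition 2.17]{MR3957824}: if $A/\Soc(A)$ is right nilpotent then so is $A$. Together with \cite[Theorem 2.20]{MR3957824}, which provides the corresponding lifting statement for the nilpotent type condition along the socle quotient, this yields that $G(X,r)$ is right nilpotent of nilpotent type as soon as $\mathcal{G}(X,r)$ is.

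I expect no genuine obstacle here, since the author explicitly points to the three results that conclude the argument; the only bookkeeping step is identifying $\mathcal{G}(X,r)$ as the appropriate socle quotient of $G(X,r)$ so that the lifting propositions of \cite{MR3957824} apply. If one preferred to avoid invoking (3) $\Rightarrow$ (2) through the socle quotient directly, one could instead argue (1) $\Rightarrow$ (2) by induction on the multipermutation level, using that retraction of $(X,r)$ corresponds to quotienting $G(X,r)$ by its socle, but the socle-quotient route is cleaner and matches exactly the references the authors cite.
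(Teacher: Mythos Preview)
Your proposal is correct and matches the paper's approach, which consists solely of citing the three references without further detail. One small point: your description of \cite[Theorem 2.20]{MR3957824} as a ``lifting statement for the nilpotent type condition along the socle quotient'' is almost certainly a misattribution---that theorem is the direct equivalence $(1)\Leftrightarrow(2)$ for structure groups, and the nilpotent-type lifting you need is an elementary group-theoretic fact (since $\Soc(A)\subseteq Z(A,+)$, nilpotence of $A_+/\Soc(A)$ implies nilpotence of $A_+$). This does not affect the validity of your argument, only the labeling of which cited result does what.
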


\begin{thm}
\label{thm:YB}
    Let $(X,r)$ and $(Y,s)$ be permutation isoclinic solutions to the YBE.  
    Then $(X,r)$ is multipermutation if and only if $(Y,s)$ is multipermutation. 
\end{thm}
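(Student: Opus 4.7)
The plan is to reduce the statement immediately to the preservation of two properties of skew braces under isoclinism, and then invoke results already established in the paper. By the lemma stated just before the theorem, a finite non-degenerate solution $(X,r)$ is multipermutation if and only if its permutation skew brace $\mathcal{G}(X,r)$ is right nilpotent and of nilpotent type. So the content of the theorem amounts to showing that the conjunction of \emph{right nilpotent} and \emph{of nilpotent type} is an isoclinism invariant of skew braces.

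First I would invoke the hypothesis: since $(X,r)$ and $(Y,s)$ are permutation isoclinic, by definition $\mathcal{G}(X,r)$ and $\mathcal{G}(Y,s)$ are isoclinic as skew braces. Then I would apply Theorem \ref{thm:right_nilpotent} to transfer right nilpotency from one to the other in either direction. Separately, the remark following Theorem \ref{thm:right_nilpotent} already records that being annihilator nilpotent is isoclinism invariant, and a short argument (also already given as a remark) shows that being of nilpotent type, i.e.\ having nilpotent additive group, is invariant as well, since isoclinic skew braces have isoclinic additive groups and nilpotency of a group is an isoclinism invariant in the classical group-theoretic sense.

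Putting these pieces together: assume $(X,r)$ is multipermutation. By the preceding lemma, $\mathcal{G}(X,r)$ is right nilpotent and of nilpotent type. Since $\mathcal{G}(X,r)$ and $\mathcal{G}(Y,s)$ are isoclinic, by Theorem \ref{thm:right_nilpotent} the brace $\mathcal{G}(Y,s)$ is right nilpotent, and by the remark on nilpotent type it is also of nilpotent type. Applying the lemma in the reverse direction gives that $(Y,s)$ is multipermutation. The converse is symmetric.

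The only real content is making sure both invariance statements are actually available; no obstacle is expected, since right nilpotency is precisely Theorem \ref{thm:right_nilpotent} and the nilpotent-type part follows from the already-noted isoclinism of the additive groups together with the classical fact that nilpotency class is controlled by isoclinism of groups. Thus the theorem is essentially a corollary of the characterization lemma and the isoclinism invariants established in Section 3.
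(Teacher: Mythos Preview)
Your proposal is correct and follows essentially the same route as the paper: reduce via the preceding lemma to showing that right nilpotency and nilpotent type are isoclinism invariants of $\mathcal{G}(X,r)$, then invoke Theorem~\ref{thm:right_nilpotent} and the remark on nilpotent type. The paper's proof is terser and mentions only the right-nilpotency transfer explicitly, leaving the nilpotent-type part implicit; your version spells out both ingredients, which is arguably cleaner.
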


\begin{proof}
    If $(X,r)$ is multipermutation, then 
    $\mathcal{G}(X,r)$ is right
    nilpotent. Thus $\mathcal{G}(Y,s)$ is right nilpotent
    and hence $(Y,s)$ is multipermutation. 
\end{proof}

We conclude the paper with concrete examples of
involutive solutions up to permutation isoclinism. Recall that solution $(X,r)$ is
said to be 
\emph{involutive} if $r^2=\id$. If $(X,r)$ is involutive, 
then 
\[
\tau_y(x)=\sigma^{-1}_{\sigma_x(y)}(x)
\]
for all $x,y\in X$. 

\begin{exa}
\label{exa:size4}
    There are four permutation isoclinism classes
    of involutive 
    solutions of size four. Let $X=\{1,2,3,4\}$. The following 
    list provides a complete set of representatives over the set $X$:
    \begin{enumerate}
        \item The flip $(x,y)\mapsto (y,x)$. 
        \item $\sigma_1=\sigma_2=\id$, $\sigma_3=(34)$ and $\sigma_4=(12)(34)$.
        \item $\sigma_1=(34)$, $\sigma_2=(1324)$, $\sigma_3=(1423)$ and $\sigma_4=(1,2)$. 
        \item $\sigma_1=(12)$, $\sigma_2=(1324)$, $\sigma_3=(34)$ and $\sigma_4=(1423)$.
    \end{enumerate}
\end{exa}

\begin{rem}
    Permutation isoclinism of solutions does not preserve 
    indecomposability. For example, 
    let $X=\{1,2,3,4\}$ and $\sigma=(1234)$. Then $(X,r)$,
    where 
    \[
    r(x,y)=(\sigma(y),\sigma^{-1}(x)),
    \]
    is indecomposable
    and 
    $\mathcal{G}(X,r)$ is the trivial skew brace 
    over the cyclic group $C_4$. It follows that  
    $(X,r)$ is isoclinic to the flip over $X$, as both
    solutions have isoclinic permutation braces (note that
    the permutation group of the flip is the trivial group). 
\end{rem}

\begin{rem}
    Permutation isoclinism of solutions does not preserve 
    the multipermutation level. For example, 
    let $X=\{1,2,3,4\}$ and $\sigma_1=\sigma_2=\sigma_3=\id$, 
    $\sigma_4=(23)$. Then $(X,r)$ 
    has multipermutation level two. Moreover, 
    $\mathcal{G}(X,r)$ is the trivial 
    skew brace over the cyclic group $C_2$. Hence the 
    solution $(X,r)$ is permutation isoclinic to the flip over $X$. 
\end{rem}

\begin{exa}
    There are six permutation isoclinism classes
    of involutive 
    solutions of size five. Let $X=\{1,2,3,4,5\}$. The following 
    list provides a complete set of representatives over $X$:
    \begin{enumerate}
        \item The flip $(x,y)\mapsto (y,x)$. 
        \item $\sigma_1=\sigma_2=\sigma_3=\id$, $\sigma_4=(45)$, $\sigma_5=(23)(45)$.
        \item $\sigma_1=\sigma_2=\sigma_3=\id$, $\sigma_4=(23)(45)$, $\sigma_5=(12)(45)$.
        \item $\sigma_1=\id$, $\sigma_2=(45)$, $\sigma_3=(2435)$, $\sigma_4=(2534)$
        and $\sigma_5=(23)$. 
        \item $\sigma_1=\id$, $\sigma_2=(23)$, $\sigma_3=(2435)$, $\sigma_4=(45)$ 
        and $\sigma_5=(2534)$. 
        \item $\sigma_1=\sigma_2=(45)$, $\sigma_3=(14)(25)$ and $\sigma_4=\sigma_5=(12)$. 
    \end{enumerate}
    Note that flips of size four and five
    are permutation isoclinic. 
    The second solution is permutation
    isoclinic to the second solution of 
    Example \ref{exa:size4}. 
    The fourth solution is permutation isoclinic to 
    the third solution 
    of Example \ref{exa:size4}. The fifth solution
    is permutation isoclinic to the fourth solution
    of Example~\ref{exa:size4}. This is a complete 
    set of permutation isoclinisms between solutions of sizes 
    four and five. 
\end{exa}

\section*{Acknowledgements}

Vendramin is supported in part by OZR3762 of Vrije Universiteit Brussel. The authors thank the referee for useful comments
and suggestions. 

\bibliographystyle{abbrv}
\bibliography{refs}

\end{document}